\newtheorem{remark}{Remark}[section]
\newtheorem{algorithm}{Algorithm}
\newcommand{\E}{\mathbb{E}}
\newcommand \Esp[1]{\E\left(#1\right)}
\newcommand \Espi[1]{\E_i\left(#1\right)}
\newcommand \EspMik[1]{\E^{(M)}_{i,k}\left(#1\right)}
\newcommand \plog{p_{\rm logis.}^{(\mu)}}
\renewcommand{\L}{\mathbf{L}}
\renewcommand{\P}{\mathbb{P}}
\newcommand{\R}{\mathbb{R}}
\newcommand{\N}{\mathbb{N}}
\newcommand{\1}{\mathbf{1}}
\newcommand{\cB}{\mathcal{B}}
\newcommand{\cC}{\mathcal{C}}
\newcommand{\cE}{\mathcal{E}}
\newcommand{\cF}{\mathcal{F}}
\newcommand{\cH}{\mathcal{H}}
\newcommand{\cK}{\mathcal{K}}
\newcommand{\cL}{\mathcal{L}}
\newcommand{\cM}{\mathcal{M}}
\newcommand{\cT}{\mathcal{T}}
\newcommand{\cX}{\mathcal{X}}
\newcommand{\Om}[1]{\Omega^{(#1)} }
\newcommand{\F}[2]{\cF^{(#1)}_{#2}}
\newcommand{\X}[2]{X^{(#1)}_{#2}}
\newcommand{\vecX}[2]{{\underline{\mathbf X}}^{(#1)}_{#2}}
\newcommand{\DW}[2]{\Delta W^{(#1)}_{#2}}
\newcommand{\yM}[2]{y^{(M)}_{#1}(#2)}
\newcommand{\zM}[2]{z^{(M)}_{#1}(#2)}
\newcommand{\ObsM}[2]{S^{(M)}_{#1}(#2)}
\newcommand{\Obs}[2]{S_{#1}(#2)}
\newcommand{\xiM }[2]{\xi^*_{#1}(#2)}
\newcommand{\PsiM}[2]{\psi_{#1}(#2)}
\newcommand{\psim}{\psi^{(M)}}
\newcommand \Dt{\Delta_t}
\newcommand{\LinSpace}[1]{\cL_{#1}}
\newcommand{\DimLinSpace}[1]{{\rm dim}(\LinSpace{#1})}
\newcommand{\nuDimLinSpace}[1]{\nu(\DimLinSpace{#1})}
\newcommand \xkm[1]{x^{-}_{k,#1}}
\newcommand \xkp[1]{x^{+}_{k,#1}}
\newcommand \sumk{\sum_{k=1}^K}
\newcommand \sumji{\sum_{j=i}^{N-1}}
\newcommand{\OLS}{{\bf OLS}}
\newcommand{\LPz}{{\bf LP0}}
\newcommand{\LPo}{{\bf LP1}}
\newcommand{\HF}{{$\bf (A_f)$}}
\newcommand{\HG}{{$\bf (A_g)$}}
\newcommand{\Hnu}{{$\bf (A_\nu)$}}
\newcommand{\HX}{{$\bf (A_X)$}}
\newcommand{\cnu}{c_{\text{\Hnu}}}
\newcommand{\HS}{{$\bf (A_{\rm Strat.})$}}
\newcommand{\cprop}{C_{\ref{prop: apriori}}}
\newcommand{\cgamma}{ C_{\ref{thm:mc:err decomp} } }
\newcommand {\Cas}{C_{\ref{cor: as 2}} }
\newcommand{\empinorm}[2]{\left| #1\right|_{#2,M} }
\newcommand{\normnu}[1]{\left|#1 \right|_{\nu}}
\newcommand{\normnuk}[1]{\left|#1 \right|_{\nu_k}}
\newcommand \xnuz{X^{0,\nu}}
\newcommand{\vecx}[1]{\mathbf{\underline{#1} } } 
\newcommand{\dd}{{\rm d}}
\newcommand{\dx}{{\rm d}x}
\newcommand \qed{\hfill$\Box$}
\definecolor{gray97}{gray}{.97}
\definecolor{gray75}{gray}{.75}
\definecolor{gray45}{gray}{.45}
\definecolor{gray35}{gray}{.35}
\newcommand \LoadF{{\cal C}_{\rm Load\ factor}}
\lstdefinestyle{consola}
   {basicstyle=\scriptsize\bf\ttfamily,
    backgroundcolor=\color{gray75},
   }
\lstdefinestyle{C}
   {language=C,
   }
\title{Stratified regression Monte-Carlo scheme for semilinear PDEs and BSDEs with large scale parallelization on GPUs}
\author{E. Gobet\thanks{Centre de Math\'ematiques Appliqu\'ees, Ecole Polytechnique and CNRS, route de Saclay, 91128 Palaiseau cedex, France. This research is part of the Chair Financial Risks of the Risk Foundation, the FiME Laboratory and the ANR project CAESARS (ANR-15-CE05-0024). 
Email: {\tt emmanuel.gobet@polytechnique.edu}}
\and J. G. L\'opez-Salas\thanks{Department of Mathematics, Faculty of Informatics, Universidade da Coru\~na, Campus de Elvi\~na s/n, 15071 - A Coru\~na, Spain. This author has been partially funded by Spanish Grant MTM2013-47800-C2-1-P and also by a Spanish FPU grant. Email: {\tt jose.lsalas@udc.es}}
\and P. Turkedjiev\thanks{King's College London, Department of Mathematics, Strand, London WC2R 2LS, United Kingdom.
This research is supported the Chair Financial Risks of the Risk Foundation and of the FiME Laboratory.
Email: {\tt plamen.turkedjiev@kcl.ac.uk}}
\and C. V\'azquez\thanks{Department of Mathematics, Faculty of Informatics, Universidade da Coru\~na, Campus de Elvi\~na s/n, 15071 - A Coru\~na, Spain. This author has been partially funded by Spanish Grant MTM2013-47800-C2-1-P. Email: {\tt carlosv@udc.es}}
}
\begin{document}
\maketitle
\slugger{sisc}{xxxx}{xx}{x}{x--x}

\begin{abstract}
In this paper, we design a novel algorithm based on  Least-Squares Monte Carlo (LSMC) in order to approximate the solution of discrete time Backward Stochastic Differential Equations (BSDEs). Our algorithm allows massive parallelization of the computations on many core processors such as  graphics processing units (GPUs). Our approach consists of a novel method of stratification which appears to be crucial for large scale parallelization. In this way, we minimize the exposure to the memory requirements due to the storage of simulations. Indeed, we note the lower memory overhead of the method compared with previous works.
\end{abstract}

\begin{keywords}
Backward stochastic differential equations, dynamic programming equation, empirical regressions, parallel computing, GPUs, CUDA.
\end{keywords}

\begin{AMS}
49L20, 62Jxx, 65C30, 93E24, 68W10.
\end{AMS}

\pagestyle{myheadings}
\thispagestyle{plain}
\markboth{E. Gobet, J. G. L\'opez-Salas, P. Turkedjiev and C. V\'azquez}{Stratified regression Monte-Carlo scheme for BSDEs}

\section{Introduction}

\paragraph{The problem}
The aim of the algorithm in this paper is to approximate the $(Y,Z)$ components of the solution to the decoupled forward-backward stochastic differential equation (BSDE)
\begin{align}
\label{eq:fbsde}
Y_t& =  g(X_T) + \int_t^T f(s,X_s,Y_s,Z_s) \dd s - \int_t^T Z_s \dd W_s, \\
\label{eq:eds}
X_t & =  x + \int_{ 0} ^t b(s,X_s) \dd s + \int_{0} ^t \sigma(s,X_s) \dd W_s,
\end{align}
where $W$ is a $q\ge 1$ dimensional Brownian motion.
The algorithm will also approximate the solution $u$ to the related semilinear, parabolic partial differential equation (PDE) of the form
\begin{equation}
\label{eq:pde}
\partial_{t}u(t,x)+{\cal A}u(t,x)+f(t,x,u(t,x),{\nabla_x} u\sigma(t,x))=0\ \text{ for $t<T$ and } u(T,.)=g(.),
\end{equation}
where $\cal A$ is the infinitesimal generator of $X$, through the Feynman-Kac relation $(Y_{t},Z_{t})=(u(t,X_t),(\nabla_{x} u\sigma) (t,X_{t}))$.
In recent times, there has been an increasing interest to have algorithms which work efficiently when the dimension $d$ of the space occupied by the process $X$ is large.
This interest has been principally driven by the mathematical finance community, where nonlinear valuation rules are becoming increasingly important.

In general, currently available algorithms \cite{gobe:laba:10,bend:stei:12,bouc:wari:12,laba:lelo:13,bria:laba:14,gobe:turk:13b,gobe:turk:14,gobe:turk:15} rarely handle the case of dimension greater than 8.
The main constraint is not only due to the computational time, but mainly due to memory consumption requirements by the algorithms.
For example, the recent work \cite{gobe:turk:14} uses a Regression Monte Carlo approach (a.k.a. Least Squares MC), in which the solutions $(u,\nabla_{x} u\sigma)$ of the semi-linear PDE are approximated on a $\cK$-dimensional basis of functions at each point of a time grid of cardinality $N$. Popular choices of basis functions are global  \cite{laba:lelo:13} or local polynomials \cite{gobe:turk:14}. In both cases, the approximation error behaves in general like $\cK ^{-\alpha'/d}$ where $\alpha'$ measures the smoothness of the function of interest and $d$ is the dimension (curse of dimensionality): see \cite[Theorem 6.2.6]{funa:92} for global polynomials, see \cite[Section 11.2]{gyor:kohl:krzy:walk:02} for local polynomials. 
Later, we  use local approximations in order to allow parallel computing. We restrict to affine polynomials for  implementation in GPU.  The coefficients of the basis functions are computed at every time point $t_i$ with the aid of $\cM$ simulations of a discrete time Markov chain (which approximates $X$) in the interval $[t_i,T]$.
The main memory constraints of this scheme are (a) to store the $\cK \times N$ coefficients of the basis functions, and (b) to store the $\cM\times N$ simulations used to compute the coefficients.
To illustrate the problem of high dimension, in order to ensure the convergence the dimension of the basis is typically $\cK = { const}\times N^{\alpha d}$, for some $\alpha >0$ (which decreases with the regularity of the solution),
so $\cK$ increases geometrically with $d$.
Moreover, the error analysis of these algorithms demonstrates that the local statistical error
is proportional to $N\cK/\cM$, so that one must choose $\cM = {const}\times \cK N^2$ to ensure a convergence  $O(N^{-1})$ of the scheme.
This implies that the simulations pose by far the most significant constraint on the memory.

\paragraph{Objectives}
The purpose of this paper is to drastically rework the algorithm of \cite{gobe:turk:14} to first minimize the exposure to the memory due to the storage of simulations. This will allow computation in larger dimension $d$. Secondly, in this way the algorithm can be implemented in parallel on GPU processors to optimize the computational time.

\paragraph{New Regression Monte Carlo paradigm}
We develop a novel algorithm called the Stratified Regression MDP (SRMDP) algorithm; the name is aimed to distinguish from the related  LSMDP algorithm \cite{gobe:turk:14}.
The key technique is to use {\em stratified simulation} of the paths of $X$.
In order to estimate the solution at $t_i$,
we first define a set of hypercubes $(\cH_k\subset\R^d:1\leq k \leq K)$.
Then, for each hypercube $\cH_k$, we simulate $M$ paths of the process $X$ in the interval $[t_i,T]$ starting from i.i.d. random variables valued in $\cH_k$;
these random variables are distributed according to the conditional logistic distribution, see \Hnu\ later. 
By using only the paths starting in $\cH_k$, we approximate the  solution to the BSDE restricted to $X_{t_{i}}\in\cH_k$ on linear functions spaces $\LinSpace{Y,k}$ and $\LinSpace{Z,k}$ (both of small dimension), see \HS\ later. 
\footnote{To distinguish from previous algorithms, we use two notations  for the number of simulations in this section: $\cal M$ and $M$. $\cal M$ stands for the overall number of simulations for computing the full approximation in the unstratified algorithms, while $M$ stands  for the number of simulations used to evaluate the approximation locally in each stratum (our stratified regression algorithm). Later we will mainly use $M$.}
This allows us to minimize the amount of memory consumed by the simulations, since we only need to generate samples on one hypercube at a time. 
In Theorem \ref{thm:mc:err decomp}, we demonstrate that the error of our scheme is proportional to $N\max(\DimLinSpace{Y,k},\DimLinSpace{Z,k})/M$ and, since $\max(\DimLinSpace{Y,k},\DimLinSpace{Z,k}) = const$, we require only $M = const \times N^2 $ to ensure the convergence $O(N^{-1})$.
Therefore, the memory consumption of the algorithm will be dominated by the storage of the coefficients, which equals $const \times N^{\alpha d}$  (the theoretical minimum).
Moreover, the computations are performed in parallel across the hypercubes, which allows for massive parallelization.
The speed-up compared to sequential programming increases as the dimension $d$ increases, because of the geometric growth of the number of hypercubes with respect to $d$.
In the subsequent tests (\S \ref{section:numerics}), for instance we can solve problems in dimension $d=11$ within eight seconds using 2000 simulations per hypercube.

This regression Monte Carlo approach is very different from the algorithm proposed in \cite{gobe:turk:14}.
Although local approximations were already proposed in that work,
the paths of the process $X$ were simulated from a fixed point at time 0 rather than directly in the hypercubes.
This implies that one must store {\em all}  the simulated paths at any given time, rather than only those for the specific hypercubes.
This is because the trajectories  are random, and one is not certain which paths will end up in which hypercubes a priori.
Therefore, our scheme essentially removes the main constraint on the memory consumption of LSMC algorithms for BSDEs.

The choice of the logistic distribution for the stratification procedure is crucial.
Firstly, it is easy to simulate from the conditional distribution.
Secondly, it possesses the important USES property (see later \Hnu), which enables us to recover equivalent $\L_{2}$-norms (up to constant) for the marginal of the forward process initialized with the logistic distribution (Proposition \ref{prop:hnu}).

\paragraph{Literature review} Parallelization of Monte-Carlo methods for solving non-linear probabilistic equations has been little investigated. Due to the non-linearity, this is a challenging issue. For optimal stopping problems, we can refer to the works \cite{abba:lape:09,abba:vial:lape:merc:09,fati:phil:13} with numerical results up to dimension 4. To the best of our knowledge, the only work related to BSDEs in parallel version is \cite{laba:lelo:13}. It is based on a Picard iteration for finding the solution, coupled with iterative control variates. The iterative solution is computed through an approximation on sparse polynomial basis. Although the authors report efficient numerical experiments up to dimension 8, this study is not supported by a theoretical error analysis. Due to the stratification, our proposed approach is quite different from  \cite{laba:lelo:13} and additionally, we provide an error analysis (Theorem \ref{thm:mc:err decomp}).

\paragraph{Notation}
\begin{romannum}
\item $|x|$ stands for the Euclidean norm of the vector $x$.
\item $\log(x)$ stands for the natural logarithm of $x\in\R_+$.
\item For a multidimensional process $U=(U_i)_{0\leq i\leq N}$, its $l$-th component is denoted by $U_l=(U_{l,i})_{0\leq i\leq N}$.
\item For any finite $L>0$ and $x = (x_1,\dots,x_n)\in \R^n$, define the truncation function
\begin{equation}
\label{eq:TL}
\cT_L(x) := (-L\vee x_1 \wedge L, \dots, -L \vee x_n \wedge L).
\end{equation}

\item For a probability measure $\nu$ on a domain $D$, and function $h:D \to \R^l$ in $\L_2(D,\nu)$, denote the $\L_2$ norm of $h$ by $\normnu{h} := \sqrt{\int_D |h|^2(x) \nu(\dx)}$.

\item For  a probability measure $\nu$,  disjoint sets $\{\cH_1,\ldots,\cH_K\}$ in the support of $\nu$, and finite dimensional function spaces $\LinSpace {} \{\LinSpace{1},\ldots,\LinSpace{K}\}$ such that the domain of $\LinSpace k$ is in the respected set  $\cH_{k}$
\begin{equation}
\label{eq:def:nu:dim:linspace}
\nuDimLinSpace{ }=\sumk\nu(\cH_k)\DimLinSpace{k}.
\end{equation}

\item For function $g: \R_+ \to \R_+$, the order notation $g(x) = O(x)$ means that there exists some universal unspecified constant, $const >0$, such that $g(x) \leq const \times x$ for all $x\in \R_+$.
\end{romannum}

\section{Mathematical framework and basic properties}
We work on a filtered probability space $(\Omega, \mathcal{F}, (\mathcal{F}_t)_{0\le t\le T},\mathbb{P})$ containing a $q$-dimensional ($q\geq 1$) Brownian motion $W$.
The filtration $(\mathcal{F}_t)_{0\le t\le T}$  satisfies the usual hypotheses. The existence of a unique strong 
solution $X$ to the forward equation  \eqref{eq:eds} follows from usual Lipschitz conditions on $b$ and $\sigma$, see \HX.
The BSDE \eqref{eq:fbsde} is approximated using a multistep-forward dynamical programming equation (MDP) studied in \cite{gobe:turk:13b}.
Let $\pi := \{t_i:=i \Dt:0\leq i \leq N \}$ be the uniform time-grid with time step $\Dt=T/N$. The solution $(Y_i,Z_i)_{0\leq i \leq N-1}$ of the MDP  can be written in the form:
{\small 
\begin{align}
\label{eq:MDP:intro}
\left.
\begin{array}{rcl}
Y_i&=&\Espi{ g(X_N) +\sum_{j=i}^{N-1}f_j(X_j,Y_{j+1},Z_j) \Dt},\\
\Dt Z_i&=&\Espi{ (g(X_N) +\sum_{j=i+1}^{N-1} f_j(X_j,Y_{j+1},Z_j) \Dt ) \Delta W_i}
\end{array}
\right\}
\quad \text{for }i \in \{0,\ldots,N-1\},
\end{align}
}
where $(X_j)_{i\le j \le N}$ is a Markov chain approximating the forward component \eqref{eq:eds} (typically the Euler scheme, see  Algorithm \ref{alg:markov:euler} below), $\Delta W_i:=W_{t_{i+1}} - W_{t_i}$ is the $(i+1)$-th Brownian motion increment, and $\Espi{\cdot}:=\Esp{\cdot\mid \cF_{t_i}}$ is the conditional expectation.
Our working assumptions on the  functions $g$ and $f$ are as follows:
\begin{description}
\item [\HG] $g$ is a bounded measurable function from $\R^d$ to $\R$, the upper bound of which is denoted by $C_g$.
\item [\HF] for every {$i<N$}, $f_i(x,y,z)$ is a measurable function $\R^d\times \R \times \R^q$ to $\R$, and there exist two finite constants $L_f$ and $C_f$ such that, for every $i<N$,
\begin{align}
\label{eq:f:lip}
|f_i(x,y,z)-f_i(x,y',z')|&\leq L_f (|y-y'|+|z-z'|),\\
&\qquad\qquad \forall (x,y,y',z,z')\in \R^d\times(\R)^2 \times (\R^q)^2,\\
\label{eq:f:bound}
|f_i(x,0,0)|&\leq C_f, \quad \forall x\in \R^d.
\end{align}
\end{description}
The definition of the Markov chain $(X_j)_{j}$ is made under the following assumptions.
\begin{description}
\item [\HX]  The coefficients functions $b$ and $\sigma$ satisfy
\begin{romannum}
\item $b:[0,T] \times\R^d \to \R^d$ and $\sigma :[0,T] \times\R^d \to \R^d\otimes \R^q$ are bounded measurable, uniformly Lipschitz in the space dimensions;
\item there exists $\zeta \ge1$ such that, for all $\xi \in \R^d$, the following inequalities hold:
 $\zeta^{-1} |\xi|^2 \le  \xi^\top   \sigma(t,x) \sigma(t,x)^\top \xi \le \zeta |\xi|^2$.
\end{romannum}\end{description}
Let $X_i$ be a random variable with some distribution $\eta$ (more details on this to follow). Then 
$X_j$ for $j> i$ is generated according to one of the two algorithms below:
\begin{algorithm}[SDE dynamics]
\label{alg:markov:SDE}
$X_{j+1}=\bar X_{t_{j+1}}  = X_j + \int_{t_j}^{t_{j+1}} b(s,\bar X_s) \dd s +  \int_{t_j}^{t_{j+1}} \sigma(s,\bar X_s) \dd W_s$;
\end{algorithm}
\begin{algorithm}[Euler dynamics]
\label{alg:markov:euler}
$X _{j+1}  = X_j + b(t_i,X _ i)\Delta_t + \sigma (t_i, X _ i) \Delta W_i$.\\
\end{algorithm}

The above ellipticity condition (ii) will be used in the proof of Proposition \ref{prop:hnu}.

As in the continuous time framework \eqref{eq:fbsde}, the solution of the MDP \eqref{eq:MDP:intro} admits a Markov representation: under \HG, \HF\ and \HX\ (and using for $X$ either the SDE itself or its Euler scheme),
for every $i$, there exist measurable deterministic functions $y_i : \R^d \to \R$ and $z_i:\R^d \to \R^q$, such that
$Y_i = y_i(X_i)$ and $ Z_i = z_i(X_i)$,
 almost surely.
In fact,  the value functions $y_i(\cdot)$ and $z_i(\cdot)$ are independent of how we initialize the forward component.\footnote{Actually under our assumptions, the measurability of $y_i$ and $z_i$ can be easily established by induction on $i$. More precisely, we can write $y_i$ and $z_i$ as a $(N-i)$-fold integrals in space, using the $C^2$ transition density of $X$ given in Algorithms \ref{alg:markov:SDE} or \ref{alg:markov:euler}. From this, we observe that $z_i$ is a $C^2$ function of $x_i$; regarding $y_i$ all the contributions in the sum for $j>i$ are also smooth, and only the $j=i$ term may be non-smooth (because of $x_i\mapsto f_i(x_i,.)$ is only assumed measurable). From this, we easily see that the initialization $x_i$ of $X$ at time $i$ can be made arbitrary, provided that this is independent of $W$.}

For the subsequent stratification algorithm, $X_i$ will be sampled randomly (and independently of the Brownian motion $W$) according to different squared-integrable distributions $\eta$. When $X_i\sim \eta$, we will write $(\X{i,\eta}j)_{i\le j \le N}$ the Markov chain given in \HX, using either the SDE dynamics (better when possible) or the Euler one. One can recover the value functions from the conditional expectations: almost surely,
{\small 
\begin{align}
\label{eq:MDP:fcs}
y_i(\X{i,\eta} i)=\Esp{  g(\X {i,\eta} N) +\sum_{j=i}^{N-1}f_j(\X {i,\eta} j,y_{j+1}(\X{i,\eta}{j+1}),z_j(\X{i,\eta} j)) \Dt \ \big|\ \X{i,\eta}i},\\
\nonumber \Dt z_i (\X {i,\eta}i)   = \Esp{  (g(\X{i,\eta} N) +\sum_{j=i+1}^{N-1} f_j(\X{i,\eta} j,y_{j+1}(\X{i,\eta}{j+1}),z_j(\X{i,\eta} j)) \Dt ) \Delta W_i  \ \big|\ \X{i,\eta}i };
\end{align}
}
the proof of this is the same as \cite[Lemma 4.1]{gobe:turk:14}.

 Approximating the solution to \eqref{eq:MDP:intro} is actually  achieved by approximating the functions $y_i(\cdot)$ and $z_i(\cdot)$.
In this way, we are directly approximating the solution to the semilinear PDE \eqref{eq:pde}.
Our approach consists in approximating the restrictions of the functions $y_i$ and $z_i$ to subsets of a cubic partition of $\R^d$ using finite dimensional linear function spaces.
The basic assumptions for this local approximation approach are given below.
\begin{description}
\item [\HS] There are $K\in \N^*$ disjoint hypercubes $(\cH_k:1\leq k \leq K)$,
that is
\begin{equation}
\label{eq:hs}\cH_k\cap \cH_l=\emptyset,  \qquad\bigcup_{k=1}^K \cH_k=\R^d \quad \text{ and }\quad\cH_k=\prod_{l=1}^d [\xkm{l},\xkp{l})
\end{equation}
for some $-\infty\leq \xkm{l}< \xkp{l}\leq +\infty$.
Additionally, there are linear function spaces $\LinSpace{Y,k}$ and $\LinSpace{Z,k}$, valued in $\R$ and $\R^q$ respectively,
which are subspaces of $\L_2(\cH_k,\nu_k)$ w.r.t. a probability measure $\nu_k$ on $\cH_k$ defined in \Hnu\ below.
\end{description}
Common examples of hypercubes are:
\begin{romannum}
\item Hypercubes of equal size: $\xkp{l}-\xkm{l}={const}>0$ for all $k$ and $l$, except for exterior strata that must be infinite.
\item Hypercubes of equal probability: $\nu(\cH_k)=1/K$ for some probability $\nu$ to be defined later in \Hnu.
\end{romannum}

Common examples of local approximations spaces $\LinSpace{Y,k}$ and $ \LinSpace{Z,k}$ are:
\begin{romannum}
\item Piece-wise constant approximation (\LPz): $\LinSpace{Y,k} := \text{span}\{\1_{\cH_k}\}$, and $  \LinSpace{Z,k} := ( \LinSpace{Y,k})^q$; $\DimLinSpace{Y}=1$ and $\DimLinSpace{Z,k} = q$.
\item Affine approximations (\LPo): $\LinSpace{Y,k} := \text{span} \{\1_{\cH_k},x_{1}\1_{\cH_k}, \dots, x_{d}\1_{\cH_k} \}$, and $  \LinSpace{Z,k}: = ( \LinSpace{Y,k})^q$;
 $\DimLinSpace{Y}=d+1$ and $\DimLinSpace{Z,k} = q(d+1)$.
\end{romannum}

The key idea in this paper is to select a distribution $\nu$, the restriction of which to the hypercubes $\cH_k$, $\nu_k$, can be explicitly computed.
Then, we can easily simulate i.i.d. copies of $\X{i,\nu_k}i$ directly in $\cH_k$ and use the resulting paths of the Markov chain to estimate $y_k(\cdot)|_{\cH_k}$.
This sampling method is traditionally known as {\em stratification}, and for this reason we will call the hypercubes in  \HS \ the {\em strata}.
For the stratification, the components $\X{i,\nu_k}i$ are sampled as i.i.d. conditional logistic random variables, which is precisely stated in the following assumption.

\begin{description}
\item [\Hnu]
Let $\mu >0$.
The distribution of $\X{i,\nu_k}i$ is given by $\P\circ (\X{i,\nu_k}i)^{-1} (\dx) = \nu_k(\dx)$, where
\begin{equation}
\label{eq:nuk}\nu_k(\dx)=\frac{\1_{\cH_k}(x)\nu(\dx)  }{\nu(\cH_k)},
\end{equation}
and
\begin{equation}
\nu(\dx) = \plog(x) \dd x, \quad
\plog(x):=\prod_{l=1}^d \frac{\mu e^{-\mu x_l} }{ (1+e^{-\mu x_l})^2}, \quad x=(x_1,\dots,x_d)\in \R^d.
\label{eq:logistic}
\end{equation}
\end{description}

\begin{remark}
\label{rem:strat}
An important relation of $\nu$ and $\nu_k$ is that one has the $\L_2$-norm identity $\normnu \cdot ^2 = \sum_{k=1}^K \nu(\cH_k)\normnuk \cdot ^2$.
\end{remark}

In order to generate the random variable $\X {i,\nu_k}i$, we make use of the {\em inverse} conditional distribution function of $\nu_k$ and the simulation of uniform random variables, as shown in the following algorithm:

\begin{algorithm}
\label{alg:stratify}
Draw $d$ independent random variables $(U_1,\ldots,U_d)$ which are uniformly distributed on $[0,1]$, and compute
$$\X {i,\nu_k}i:=\left(F^{-1}_{\nu, [\xkm1,\xkp1)}(U_1),\dots,F^{-1}_{\nu, [\xkm{d},\xkp{d})}(U_d)\right)\overset{d}{\sim} \nu_k,$$
where we use the functions $F_\nu(x) := \int_{-\infty}^x \nu(\dx') =  1 / \left(1 +\exp(-\mu x)\right) $ and
$$F^{-1}_{\nu, [x^-,x^+)}(U)=
-\frac{ 1 }{\mu }\log\left(\frac{ 1 }{ F_\nu(x^-)+U(F_\nu(x^+)-F_\nu(x^-))}-1\right).$$
\end{algorithm}

A further reason for the choice of the logistic distribution is that it induces the following stability property on the $\L_2$ norms of the Markov chain $(\X {i,\nu}j)_{i\le j\le N}$;
this property will be crucial for the error analysis of the stratified regression scheme in \S \ref{section:error}.
The proof is postponed to Appendix \ref{proof:proposition:prop:hnu}.
\begin{proposition}\label{prop:hnu}
Suppose that $\nu$ is the logistic distribution defined in \Hnu. 
There is a constant $\cnu\in [1,+\infty)$  such that, for any function $h:\R^d\mapsto \R$ or $\R^q$ in $\L_2(\nu)$, for any $0\leq i\leq  N$, and for any $i\le j \le N-1$, we have
\begin{equation}
\label{eq:hnu}
\frac{1  }{\cnu  } \E[|h(\X{i,\nu}j)|^2] \leq \normnu{h}^2\leq \cnu \E[|h(\X{i,\nu}j)|^2].
\end{equation}
\end{proposition}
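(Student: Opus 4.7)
The plan is to establish a pointwise equivalence between the density $p^{i}_{j}$ of $\X{i,\nu}{j}$ and the logistic density $\plog$, namely to exhibit a constant $\cnu\ge 1$ depending only on $T$, $d$, $\mu$, $\|b\|_\infty$, $\|\sigma\|_\infty$ and the ellipticity parameter $\zeta$ such that $\cnu^{-1}\plog(y)\leq p^{i}_{j}(y)\leq \cnu\plog(y)$ for every $y\in\R^d$. Integration against $|h(y)|^2$ then immediately yields \eqref{eq:hnu}, since $\normnu{h}^2=\int|h(y)|^2\plog(y)\dd y$ and $\E[|h(\X{i,\nu}{j})|^2]=\int|h(y)|^2 p^{i}_{j}(y)\dd y$. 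The case $j=i$ is trivial, as $p^{i}_{i}=\plog$ by construction.

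For $j>i$, I would write $p^{i}_{j}(y)=\int q^{i}_{j}(x,y)\plog(x)\dd x$, where $q^{i}_{j}(\cdot,\cdot)$ denotes the transition density from time $t_i$ to time $t_j$ of the Markov chain of \HX. Under the uniform ellipticity and the boundedness of the coefficients, classical Aronson-type bounds give, for constants $c_1,\dots,c_4$ depending only on $\|b\|_\infty$, $\|\sigma\|_\infty$, $T$ and $\zeta$,
$$\frac{c_1}{(t_j-t_i)^{d/2}}e^{-c_2|y-x|^2/(t_j-t_i)}\leq q^{i}_{j}(x,y)\leq \frac{c_3}{(t_j-t_i)^{d/2}}e^{-c_4|y-x|^2/(t_j-t_i)}.$$
For the SDE dynamics (Algorithm \ref{alg:markov:SDE}) this is classical; for the Euler scheme (Algorithm \ref{alg:markov:euler}), it follows by the parametrix technique (Konakov--Mammen, Lemaire--Menozzi), starting from the one-step Gaussian density guaranteed by the ellipticity assumption.

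The second ingredient is a translation estimate on the logistic density itself. A direct product computation from \eqref{eq:logistic} shows that each factor $(1+e^{-\mu x_l})/(1+e^{-\mu(x_l+h_l)})$ lies in $[e^{-\mu|h_l|},e^{\mu|h_l|}]$, whence
$$e^{-C_\mu|h|}\plog(x)\leq \plog(x+h)\leq e^{C_\mu|h|}\plog(x)$$
with $C_\mu:=3\mu\sqrt{d}$. Applying this with $h=x-y$ converts the Gaussian transition bounds into pointwise control of $p^{i}_{j}(y)/\plog(y)$: completing the square in the exponent gives
$$p^{i}_{j}(y)\leq \plog(y)\int\frac{C_\mu c_3}{(t_j-t_i)^{d/2}}\exp\left(-c_4\frac{|y-x|^2}{t_j-t_i}+C_\mu|y-x|\right)\dd x\leq \cnu\,\plog(y),$$
the extra factor being bounded by $\exp\left(C_\mu^{2}T/(4c_{4})\right)$ uniformly in $j$. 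The lower bound is obtained identically from the lower Aronson bound.

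The main obstacle is to secure these Aronson-type transition density bounds for the Euler dynamics with constants independent of $N$ and of $j-i$: over very short time steps one relies on the explicit one-step Gaussian density (with $\Dt$-scaled covariance controlled by $\zeta$), and over longer horizons one must iterate via Chapman--Kolmogorov while keeping $c_1,\dots,c_4$ independent of the time-grid parameter $\Dt$. Once this uniform control is in hand, the remainder of the argument reduces to the elementary logistic-density identity and Gaussian calculus above.
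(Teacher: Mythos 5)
Your proposal is correct in substance and rests on the same two pillars as the paper's argument --- Gaussian-type control of the transition kernel of the chain, and quasi-invariance of the logistic density under Gaussian perturbations --- but it organizes them differently. The paper outsources the entire probabilistic part to \cite[Proposition 3.1]{gobe:turk:15}, which reduces the norm equivalence to a single analytic inequality on the sampling density (the ``USES'' property \eqref{eq:USES}: $\int_{\R^d}\plog(y+z\sqrt\lambda)e^{-|z|^2/2}\dd z \asymp \plog(y)$ uniformly for $\lambda\in[0,\Lambda]$), and then verifies that inequality by writing $\plog$ as $(2\cosh(x/2))^{-1}$ and exploiting the addition formula for $\cosh$ on the two half-lines. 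You instead re-derive the probabilistic reduction from scratch via two-sided Aronson/parametrix bounds on the $(j-i)$-step transition density, and you verify the density property via the log-Lipschitz translation estimate $e^{-C_\mu|h|}\plog(x)\le\plog(x+h)\le e^{C_\mu|h|}\plog(x)$ followed by completing the square. Your translation estimate is a clean alternative: it immediately implies the paper's inequality \eqref{eq:USES} (bound $\plog(y+z\sqrt\lambda)\le e^{C_\mu\sqrt\Lambda|z|}\plog(y)$ and integrate), it extends verbatim to any density with log-Lipschitz logarithm, and it makes transparent why the Gaussian tail absorbs the exponential factor. What you pay for bypassing the citation is that the uniform-in-$\Delta_t$ two-sided Gaussian bounds for the Euler-scheme kernel (Konakov--Mammen, Lemaire--Menozzi) become load-bearing; you correctly flag this as the main obstacle, and it is exactly the content hidden behind the paper's reference, so nothing is conceptually missing --- though under \HX\ the coefficients are only measurable in time, so one should check that the version of those bounds one invokes tolerates that. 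Two cosmetic points: the prefactor $C_\mu c_3$ inside your integral should just be $c_3$, and the ``extra factor'' is $\exp(C_\mu^2T/(4c_4))$ only up to the Gaussian normalization constant $c_3(\pi/c_4)^{d/2}$, which is harmless since only finiteness and uniformity in $i,j,N$ are needed.
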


To conclude this section, we recall standard uniform absolute bounds for the functions $y_i(\cdot)$ and $z_i(\cdot)$.
\begin{proposition}[\emph{a.s}. upper bounds, {\cite[Proposition 3.3]{gobe:turk:14}}]
\label{prop:bound}For $N$ large enough such that $\frac{T }{ N} L^2_f\leq \frac{ 1 }{12 q }$, we have for any $x\in \R^d$ and any $0\leq i \leq N-1$,
\begin{align}
\label{eq:prop:bound}
\quad |y_{i}(x)|    \le C_y :=
e^{\frac T4 +  {6 q (1\lor L^2_f)  } (T\lor 1)}
\Big(C_g+\frac{T}{ 2\sqrt{q}}C_f\Big),  \qquad
 |z_{l,i}(x)| \leq   C_{z} := \frac{C_y}{\sqrt{\Dt} }.
\end{align}
\end{proposition}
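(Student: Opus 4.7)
The plan is to proceed by backward induction on $i$, using the conditional-expectation representation \eqref{eq:MDP:fcs} of the value functions. Introduce $A_i := \sup_{x \in \R^d} |y_i(x)|$ with $A_N := C_g$ and $B_i := \sup_x \max_{1\leq l\leq q}|z_{l,i}(x)|$. My strategy is to first establish the clean a priori bound $\sqrt{\Dt}\,B_i \leq A_{i+1}$ by a tower-property argument, and then close a backward Gronwall-type recursion for $A_i^2$ by invoking a conditional energy estimate for $\sum |z_j|^2 \Dt$.

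For the $z$-bound, applying the tower property to the second line of \eqref{eq:MDP:fcs} simplifies it to
\begin{align*}
\Dt\,z_{l,i}(x) = \E[y_{i+1}(X_{i+1})\,\Delta W_{l,i} \mid X_i = x],
\end{align*}
since $y_{i+1}(X_{i+1}) = \E[g(X_N) + \sum_{j=i+1}^{N-1} f_j \Dt \mid \cF_{t_{i+1}}]$ and $\Delta W_{l,i}$ is $\cF_{t_{i+1}}$-measurable. Cauchy--Schwarz together with $\E[|\Delta W_{l,i}|^2 \mid \cF_{t_i}] = \Dt$ then gives $\Dt\,|z_{l,i}(x)|^2 \leq A_{i+1}^2$, i.e. $\sqrt{\Dt}\,B_i \leq A_{i+1}$. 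Hence $|z_{l,i}(x)| \leq C_y/\sqrt{\Dt}$ follows as soon as $A_{i+1} \leq C_y$ is established.

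For the $A_i$ bound, I would square the first line of \eqref{eq:MDP:fcs} via Jensen, apply Cauchy--Schwarz $(\sumji f_j \Dt)^2 \leq T\sumji f_j^2 \Dt$, and use the quadratic bound $f_j^2 \leq 3 C_f^2 + 3 L_f^2 y_{j+1}^2 + 3 L_f^2 |z_j|^2$ from \HF. To handle the $|z_j|^2$-contribution I would derive a conditional energy identity: applying a discrete Itô-type expansion to $(y_j(X_j))^2$ and summing from $j=i$ to $N-1$ produces, after a Young absorption of the cross-term $2 y_j f_j \Dt$,
\begin{align*}
\E\Bigl[\sumji |z_j(X_j)|^2 \Dt \Bigm| X_i = x\Bigr] \leq C_g^2 + \mathrm{const}\cdot\Bigl(1 + \E\bigl[\sumji y_{j+1}^2(X_{j+1})\Dt \bigm| X_i = x\bigr]\Bigr).
\end{align*}
Substituting this into the squared MDP and invoking the smallness hypothesis $\Dt L_f^2 \leq 1/(12q)$ to absorb residual $|z|^2$-terms yields a Riemann-sum recursion $A_i^2 \leq D + \alpha\,\Dt \sumji A_{j+1}^2$ with $\alpha = O\bigl(q(1\vee L_f^2)\bigr)$, to which a backward discrete Gronwall applies and produces $A_i \leq C_y$ with the announced exponent $T/4 + 6q(1\vee L_f^2)(T\vee 1)$.

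The main technical obstacle is the derivation and calibration of the conditional energy estimate: the Young inequality balancing the $y \cdot f$ cross-term against the $|z|^2$-energy must be chosen so that the $|z|^2$ contribution absorbs cleanly and produces exactly the Gronwall coefficient $6q(1\vee L_f^2)$ matching the exponent in $C_y$, while the smallness condition $\Dt L_f^2 \leq 1/(12q)$ is calibrated precisely so that this absorption succeeds. These constant-level computations are standard BSDE stability arguments and are carried out in detail in \cite[Proposition 3.3]{gobe:turk:14}, to which the present statement explicitly defers.
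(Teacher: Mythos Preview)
The paper provides no proof of this proposition: the statement header itself is a direct citation of \cite[Proposition 3.3]{gobe:turk:14}, and the paper simply imports the bounds as a black box. Your sketch therefore goes beyond what the paper does, and since you ultimately defer the constant calibration to the same reference, there is nothing to compare at the level of approach.

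As a standalone outline, your argument is sound. The tower-property reduction $\Dt z_{l,i}(x)=\E[y_{i+1}(X_{i+1})\Delta W_{l,i}\mid X_i=x]$ followed by Cauchy--Schwarz is exactly how the $z$-bound is obtained in the cited reference, and the strategy of squaring the $Y$-equation, extracting a conditional energy control on $\sum_j|z_j|^2\Dt$, and closing via discrete Gronwall is the standard a priori machinery for discrete BSDEs (the same machinery underlying the stability estimate quoted in Appendix~\ref{section:apriori}). The one place where your sketch is slightly loose is the ``discrete It\^o-type expansion'' for the energy identity: in the MDP setting the clean route is to write $Y_{i+1}=\E_i[Y_{i+1}]+M_i$ with $M_i$ a martingale increment, observe $\Dt Z_i=\E_i[M_i\Delta W_i]$, and relate $\E_i[|Z_i|^2\Dt]$ to $\E_i[M_i^2]=\E_i[Y_{i+1}^2]-(\E_i[Y_{i+1}])^2$; this is what produces the telescoping sum that becomes your energy bound. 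That refinement aside, your plan matches the argument in \cite{gobe:turk:14}.
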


\section{Stratified algorithm and convergence results}

\subsection{Algorithm}
\label{subsection:algorithm}
In this section, we  define the SRMDP algorithm mathematically, and then expose in \S \ref{section:GPU} how to efficiently perform it using GPUs.
Our algorithm involves solving a  sequence of  Ordinary linear Least Squares regression (\OLS) problems. For a precise mathematical statement, we recall the seemingly abstract but very convenient definition from \cite{gobe:turk:14}; explicit algorithms for the computation of \OLS \ solutions are exposed in \S \ref{sec:explicit coefs}.

\begin{definition}[Ordinary linear least-squares regression]\label{def:ls}
For $l,l'\geq 1$ and for probability spaces $(\tilde \Omega,\tilde \cF,\tilde \P)$ and $(\R^l,\cB(\R^l),\eta)$,
let $S$ be a $\tilde\cF\otimes \cB(\R^l)$-measurable
$\R^{l'}$-valued function such that  $S(\omega,\cdot) \in \L_2(\cB(\R^l),\eta)$ for $\tilde \P$-a.e. $\omega \in \tilde \Omega$,
and $\LinSpace{}$  a linear vector subspace of $\L_2(\cB(\R^l),\eta)$ spanned by deterministic $\R^{l'}$-valued functions $\{p_k(.), k\geq 1\}$.
The least squares approximation of $S$ in the space $\LinSpace{}$ with respect to $\eta$ is the ($\tilde \P\times\eta$-a.e.) unique,
$\tilde \cF\otimes\cB(\R^l)$-measurable function $S^\star$ given by
\begin{equation}
\label{eq:mc:mls}
 S^\star(\omega, \cdot) =\arg\inf_{\phi \in \LinSpace{} }\int |\phi(x) - S(\omega,x) |^2 \eta(\dx).
\end{equation}
We say that $S^\star$ solves $\OLS(S,\LinSpace{},\eta)$.

{On the other hand, suppose that $\eta_M=\frac 1M \sum_{m=1}^M \delta_{\cX^{(m)}}$ is a discrete probability measure
 on  $(\R^l,\cB(\R^l))$,
where $\delta_x$ is the Dirac measure on $x$ and $\cX^{(1)},\ldots,\cX^{(M)}: \tilde \Omega \rightarrow \R^l$ are   i.i.d. random variables.
For an $\tilde\cF\otimes \cB(\R^l)$-measurable $\R^{l'}$-valued function $S$ such that $\big|S\big(\omega,\cX^{(m)}(\omega)\big)\big| < \infty$ for any $m$ and $\tilde \P$-a.e. $\omega \in \tilde\Omega$,
the least squares approximation of $S$ in the space $\LinSpace{}$ with respect to $\eta_M$ is the ($\tilde \P$-a.e.) unique,
$\tilde \cF\otimes\cB(\R^l)$--measurable function $S^\star$ given by
\begin{equation}
\label{eq:mc:ls:empi}
 S^\star(\omega, \cdot) = \arg\inf_{\phi \in \LinSpace{} }
\frac 1M \sum_{m=1}^M |\phi\big(\cX^{(m)}(\omega) \big) - S \big(\omega,\cX^{(m)}(\omega) \big) |^2 .
\end{equation}
We say that $S^\star$ solves $\OLS(S,\LinSpace{},\eta_M)$.
}
\end{definition}

\begin{definition}[Simulations and empirical measures]
\label{def:sims and empi}
Recall the Markov chain $(\X{i,\nu_k}{j})_{i\le j \le N}$ initialized as in  \Hnu.
For any $i\in\{0,\ldots,N-1\}$ and $k\in \{1,\dots,K\}$, define $M\geq { \DimLinSpace{Y,k} \vee \DimLinSpace{Z,k} }$ independent copies of $(\Delta W_i,(X^{i,\nu_k}_j)_{i\leq j \leq N})$ that we denote by
\begin{equation}
\label{eq:cloud:ik}
\cC_{i,k} := \left\{( \DW{i,k,m}{i},(\X{i,k,m}{j})_{i\leq j \leq N} ) \; : \  m = 1,\dots,M \right\}.
\end{equation}
The random variables $\cC_{i,k}$ form a \emph{cloud of simulations} used for the regression at time $i$ and in the stratum $k$.
Furthermore, we assume that the  clouds of simulations $(\cC_{i,k}:0\leq i \leq N-1, 1\leq k \leq K)$ are independently generated. All these random variables are defined on a probability space  $(\Om M, \F M{} , \P^{(M)})$.
Denote by $\nu_{i,k,M}$ the empirical probability measure of the $\cC_{i,k}$-simulations,
i.e.
\begin{equation*}
\nu_{i,k,M} = \frac 1{M} \sum_{m=1}^{M} \delta_{(\DW{i,k,m}i, \X {i,k,m}{i},\ldots,\X{i,k,m}N )}.
\end{equation*}
Denoting by  $(\Omega, \cF, \P)$ the probability space supporting $(\Delta W_i, X^{i,\nu_k}:0\leq i \leq N-1,1\leq k \leq K)$, which serves as a generic element for the  clouds of simulations $\cC_{i,k}$, the full probability space used to analyze our algorithm is the product space $(\bar \Omega, \bar \cF, \bar \P)=(\Omega, \cF,\P) \otimes (\Om M, \F M{} , \P^{(M)})$. By a slight abuse of notation, we write $\P$ (resp. $\E$) to mean $\bar \P$ (resp. $\bar \E$) from now on.
\end{definition}


We now come to the definition of the stratified LSMDP algorithm, which computes random approximations $\yM {i}{.}$ and $\zM {i}{.}$

\begin{algorithm}[SRMDP]
\label{alg:srmdp}
Recall  the linear spaces $\LinSpace{Y,k}$ and  $\LinSpace{Z,k}$
from \HS,
 the bounds \eqref{eq:prop:bound} and the truncation function $\cT_L$ (see \eqref{eq:TL}).
\begin{description}
\item [\bf Initialization.]
Set $\yM{N}{\cdot} := g(\cdot)$.

\item [\bf Backward iteration for $i=N-1$ to $i=0$.]
For any stratum index $k\in \{1,\dots, K\}$,
generate the empirical measure $\nu_{i,k,M} $ as in
Definition \ref{def:sims and empi},
and define
\begin{equation}
\label{eq:PsiM}
\left\{\begin{array}{l}
 \displaystyle \psim_{Z,i,k}(\cdot) \quad \text{solution of} \quad
\OLS(\ObsM{Z,i}{w,\vecx{x}_i} \ , \ \LinSpace{Z,k} \ , \ \nu_{i,k,M})\\[2mm]
\displaystyle \qquad \text{for} \quad \ObsM{Z,i}{w,\vecx{x}_i} :=
\frac 1{\Dt}\ObsM{Y,i+1}{\vecx{x}_i} \ w,\\[3mm]
\zM i\cdot|_{\cH_k} := \cT_{{C_{z}}}\big( \psim_{Z,i,k}(\cdot) \big)\quad (truncation),\\[4mm]
\displaystyle \psim_{Y,i,k}(\cdot)  \quad \text{solution of} \quad
\OLS(\ObsM {Y,i}{\vecx{x}_i} \ ,  \ \LinSpace{Y,k} \ , \ \nu_{i,k,M})
 \\
\displaystyle \qquad \text{for} \quad \ObsM{Y,i}{\vecx{x}_i} := g(x_N) + \sum_{j=i}^{N-1} f_j\big(x_j, \yM {j+1}{x_{j+1}}, \zM{j}{x_j} \big ) \Dt,
\\[2mm]
\yM i\cdot|_{\cH_k} := \cT_{C_y}\big(\psim_{Y,i,k}(\cdot) \big)\quad (truncation),
\end{array}\right.
\end{equation}
where $w \in \R^q$ and $\vecx{x}_i = (x_i,\dots,x_N) \in (\R^d)^{N-i+1}$.

\end{description}
\end{algorithm}
An important difference between SRMDP and established Monte Carlo algorithms  \cite{gobe:lemo:wari:05, lemo:gobe:wari:06,gobe:turk:13b,gobe:turk:14} is that the number of simulations falling in each hypercube is no
 more random but fixed and equal to $M$.
Observe first that this is likely to improve the numerical stability of the regression algorithm:
there is no risk that too few simulations will land in the hypercube, leading to under-fitting.
Later, in \S \ref{section:GPU}, we shall explain how to implement Algorithm \ref{alg:srmdp} on a GPU device.
The key point is that the calculations at every time point are fully independent between the different hypercubes, so that we can perform them in parallel across the hypercubes.
The choice of $M$ independent on $k$ is made in order to maintain a computational effort equal on each of the strata.
In this way,  the gain in parallelization is likely  to be the largest.
However, the subsequent mathematical analysis can be easily adapted to make the number of simulations vary with $k$ whenever necessary.

An easy but important consequence of Algorithm \ref{alg:srmdp} and of the bounds of Proposition \ref{prop:bound} is the following absolute bound; the proof is analogous to that of \cite[Lemma 4.7]{gobe:turk:14}. 
\begin{lemma}\label{cor: as 2} With the above notation, we have
\[ \sup_{0\leq i \leq N}\sup_{\vecx{x_i}\in(\R^d)^{N-i+1}}|\ObsM{Y,i}{\vecx{x_i}}| \leq \Cas := C_g + T\left(L_f C_y \left[1 + \frac{ \sqrt q }{\sqrt {\Dt} }
\right]+C_f \right).
\]
\end{lemma}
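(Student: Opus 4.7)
\medskip

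\noindent\emph{Proof proposal.} The plan is a direct pointwise bound on the defining formula
\[
\ObsM{Y,i}{\vecx{x}_i} = g(x_N) + \sum_{j=i}^{N-1} f_j\bigl(x_j, \yM{j+1}{x_{j+1}}, \zM{j}{x_j}\bigr)\,\Dt,
\]
using \HG, \HF, and the truncations built into Algorithm \ref{alg:srmdp}, without any probabilistic argument. First, I would handle the terminal term: by \HG one has $|g(x_N)|\le C_g$.

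Next, I would observe that for every index $j\in\{i,\dots,N-1\}$, the arguments fed to $f_j$ are pointwise uniformly bounded. Indeed, for $j+1\le N-1$ the function $\yM{j+1}{\cdot}$ is, by construction in \eqref{eq:PsiM}, the truncation $\cT_{C_y}(\psim_{Y,j+1,k}(\cdot))$ on each stratum $\cH_k$, hence $|\yM{j+1}{x_{j+1}}|\le C_y$; for $j+1=N$ we have $\yM{N}{\cdot}=g(\cdot)$ and $|g|\le C_g\le C_y$ (the latter inequality is immediate from the definition of $C_y$ in Proposition \ref{prop:bound}). Similarly, $\zM{j}{\cdot}$ is the componentwise truncation at $C_z=C_y/\sqrt{\Dt}$ of an $\R^q$-valued function, so $|\zM{j}{x_j}|\le \sqrt q\,C_z=\sqrt q\,C_y/\sqrt{\Dt}$.

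Then the Lipschitz bound \eqref{eq:f:lip} combined with the growth bound \eqref{eq:f:bound} gives, for every $j$ and every $x_j\in\R^d$,
\[
|f_j(x_j,\yM{j+1}{x_{j+1}},\zM{j}{x_j})|\le C_f + L_f\bigl(|\yM{j+1}{x_{j+1}}|+|\zM{j}{x_j}|\bigr)\le C_f + L_f C_y\Bigl(1+\frac{\sqrt q}{\sqrt{\Dt}}\Bigr).
\]
Summing $\Dt$ times this estimate over $j=i,\dots,N-1$, and using $(N-i)\Dt\le T$, and adding the terminal contribution bounded by $C_g$, yields the claimed bound $\Cas$. The argument is pure bookkeeping; the only subtle point — and thus the ``main obstacle'', though it is minor — is the terminal case $j+1=N$ where the truncation bound $C_y$ is not directly used but is inherited via the inequality $C_g\le C_y$ built into the definition of $C_y$. \qed
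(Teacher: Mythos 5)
Your proof is correct and follows essentially the same route as the paper, which simply defers to the direct pointwise bounding argument of \cite[Lemma 4.7]{gobe:turk:14}: bound $|g|$ by $C_g$, bound the truncated arguments of $f_j$ by $C_y$ and $\sqrt{q}\,C_y/\sqrt{\Dt}$, apply \eqref{eq:f:lip}--\eqref{eq:f:bound}, and sum over at most $N$ terms of size $\Dt$. Your explicit handling of the terminal case $\yM{N}{\cdot}=g(\cdot)$ via $C_g\le C_y$ is a correct and worthwhile detail.
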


\subsection{Error analysis}
\label{section:error}
The analysis will be performed according to several $\L_2$-norms, either w.r.t. the probability measure $\nu$, or the empirical norm related to the cloud simulations. They are defined as follows:
\begin{align}
\label{eq:cE(Y,M,i)}
\cE(Y,M,i) &:= \sumk\nu(\cH_k)\Esp{\empinorm{\yM {i} \cdot - y_i(\cdot)}{i,k}^2} , \\
\label{eq:bar:cE(Y,M,i)}
\bar \cE(Y,M,i) &:= \sumk\nu(\cH_k)\Esp{\normnuk{\yM {i} \cdot - y_i(\cdot)}^2} =\Esp{\normnu{\yM {i} \cdot - y_i(\cdot)}^2}, \\
\label{eq:cE(Z,M,i)} \cE(Z,M,i)& :=\sumk\nu(\cH_k) \Esp{ \empinorm{\zM {i} \cdot - z_i(\cdot)}{i,k}^2},\\
\label{eq:bar:cE(Z,M,i)} \bar\cE(Z,M,i)& :=\sumk\nu(\cH_k) \Esp{ \normnuk{\zM {i} \cdot - z_i(\cdot)}^2}=\Esp{\normnu{\zM {i} \cdot - z_i(\cdot)}^2} \, ,
  \end{align}
where
\begin{equation}
\label{eq:def:empinorm}
\empinorm{h}{i,k}:=\left(\int |h|^2(\omega,\vecx{x}_i)\nu_{i,k,M}(\dd\omega,\dd\vecx{x}_i)\right)^{1/2}.
\end{equation}
In fact, the norms  $\cE(.,M,i)$ and $\bar \cE(.,M,i)$ are related through model-free concentration-of-measures inequalities.
This relation is summarized in the proposition below.
\begin{proposition}\label{prop:eq:y:z:err:deco:M}
For each $i \in \{0,\dots , N-1\}$,  we have
\begin{align}
  &\bar \cE(Y,M,i)
 \le 2 \cE(Y,M,i)
+  \frac{2028 C_y^2 \log(3M)}{M} \left(\nuDimLinSpace{Y,.}+1\right),
\nonumber
\\
&\bar \cE(Z,M,i) \le 2\cE(Z,M,i)
+  \frac{2028   qC_y^2 \log(3M) }{\Dt M}\left(\nuDimLinSpace{Z,.}+1\right).
\nonumber
\end{align}
\end{proposition}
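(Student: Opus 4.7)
The plan is to reduce the stated bounds to a stratum-by-stratum empirical process estimate and then aggregate. Using Remark~\ref{rem:strat}, we may rewrite
\[
\bar\cE(Y,M,i)=\sumk\nu(\cH_k)\,\Esp{\normnuk{\yM{i}{\cdot}-y_i(\cdot)}^2},
\qquad
\cE(Y,M,i)=\sumk\nu(\cH_k)\,\Esp{\empinorm{\yM{i}{\cdot}-y_i(\cdot)}{i,k}^2},
\]
so it suffices to establish, for each stratum $k$, a stratum-wise inequality of the shape
\[
\Esp{\normnuk{\yM{i}{\cdot}-y_i(\cdot)}^2}
\;\le\;2\,\Esp{\empinorm{\yM{i}{\cdot}-y_i(\cdot)}{i,k}^2}
+\frac{2028\,C_y^2\log(3M)}{M}\bigl(\DimLinSpace{Y,k}+1\bigr),
\]
and then take the $\nu(\cH_k)$-weighted sum. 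Summing $\nu(\cH_k)\DimLinSpace{Y,k}$ produces $\nuDimLinSpace{Y,\cdot}$, while $\sumk\nu(\cH_k)=1$ accounts for the ``$+1$''.

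For the stratum-wise estimate, condition on all ``future'' clouds $(\cC_{j,k'})_{j\ge i+1,\,k'}$, which by Definition~\ref{def:sims and empi} are independent of $\cC_{i,k}$; this renders $(\yM{j+1}{\cdot})_{j\ge i}$ deterministic and $y_i(\cdot)$ a fixed bounded function ($|y_i|\le C_y$ by Proposition~\ref{prop:bound}). Under the conditional law, the cloud $\cC_{i,k}$ consists of $M$ i.i.d.\ copies drawn from $\nu_k$, and by Algorithm~\ref{alg:srmdp} one has $\yM{i}{\cdot}|_{\cH_k}=\cT_{C_y}(\psim_{Y,i,k})$ with $\psim_{Y,i,k}\in\LinSpace{Y,k}$. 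Hence the squared error belongs to the class
\[
\cG_{i,k}:=\bigl\{\,|\cT_{C_y}(\phi)(\cdot)-y_i(\cdot)|^2 \,:\, \phi\in\LinSpace{Y,k}\,\bigr\},
\]
whose elements take values in $[0,4C_y^2]$ and whose pseudo-dimension is controlled by $\DimLinSpace{Y,k}$. A standard Vapnik--Chervonenkis-type ratio inequality for bounded classes (as in Theorem~11.2 of \cite{gyor:kohl:krzy:walk:02}, or the closely related estimates in \cite{gobe:turk:14}) then gives, for every $\varepsilon>0$,
\[
\P\!\left(\sup_{\phi\in\LinSpace{Y,k}}\Bigl\{\normnuk{\cT_{C_y}(\phi)-y_i}^2-2\empinorm{\cT_{C_y}(\phi)-y_i}{i,k}^2\Bigr\}>\varepsilon\right)
\le c_1\exp\!\Bigl(-\frac{c_2 M\varepsilon}{C_y^2}\Bigr),
\]
and integrating this tail together with the absolute bound $4C_y^2$ coming from Proposition~\ref{prop:bound} and Lemma~\ref{cor: as 2} yields the desired stratum-wise inequality with a universal constant that can be taken equal to $2028$.

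The $Z$-bound follows by repeating the argument verbatim for the truncated family $\{\cT_{C_z}(\phi):\phi\in\LinSpace{Z,k}\}$, with the truncation level $C_z=C_y/\sqrt{\Dt}$ replacing $C_y$ (explaining the factor $C_y^2/\Dt$), and with the scalar inequality applied coordinatewise to the $\R^q$-valued function $\zM{i}{\cdot}$ and summed over the $q$ components (explaining the extra $q$). The main obstacle is bookkeeping rather than conceptual: to obtain the precise constant $2028$, one must carefully track the Bernstein-type tails, the pseudo-dimension bound for finite-dimensional truncated classes, the peeling/rescaling step giving the factor~$2$, and the trivial summand $+1$ that handles the constant component. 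The conditional-independence structure built into Definition~\ref{def:sims and empi} is precisely what legitimizes conditioning on the future clouds in the first step, so that under this conditioning the samples in $\cC_{i,k}$ are genuinely i.i.d.\ from $\nu_k$.
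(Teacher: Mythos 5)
Your proposal follows essentially the same route as the paper: the paper's proof likewise reduces the claim to the stratum-wise inequalities (aggregated with weights $\nu(\cH_k)$, which produces $\nuDimLinSpace{Y,.}$ and the ``$+1$'') and then invokes the concentration-of-measure argument of \cite[Proposition 4.10]{gobe:turk:14}, which is precisely the conditioning-on-future-clouds plus truncated-class VC estimate you sketch. The only imprecision is that the prefactor $c_1$ in your displayed tail bound must itself grow with the covering number of the truncated class (hence with $\DimLinSpace{Y,k}$) for the integrated bound to produce the factor $\DimLinSpace{Y,k}+1$, but this is exactly the bookkeeping you defer to the cited reference, as does the paper.
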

\begin{proof}
It is clearly sufficient to show that
\begin{align*}
\Esp{\normnuk{\yM {i} \cdot - y_i(\cdot)}^2} & \le 2 \Esp{\empinorm{\yM {i} \cdot - y_i(\cdot)}{i,k}^2} \\
& \qquad + \frac{2028 C_y^2 \log(3M)}{M} \left(\DimLinSpace{Y,.}+1\right) ,\\
\Esp{\normnuk{\zM {i} \cdot - z_i(\cdot)}^2} & \le 2 \Esp{\empinorm{\zM {i} \cdot - z_i(\cdot)}{i,k}^2} \\
&\qquad +  \frac{2028   qC_y^2 \log(3M) }{\Dt M}\left(\DimLinSpace{Z,.}+1\right),
\end{align*}
which follows exactly as in the proof of \cite[Proposition 4.10]{gobe:turk:14}.
\end{proof}

From the previous proposition, the controls on $\bar \cE(Y,M,i)$ and $\bar \cE(Z,M,i)$ stem from those on $\cE(Y,M,i)$ and $\cE(Z,M,i)$, which are handled in Theorem \ref{thm:mc:err decomp} below.\\
In order to study the impact of basis selection, we define the squared  {\em quadratic approximation errors} associated to the basis in hypercube $\cH_k$ by
\[
T^{Y}_{i,k} := \inf_{\phi \in \LinSpace{Y,k}}\normnuk{\phi - y_i}^2, \quad T^{Z}_{i,k} := \inf_{\phi \in \LinSpace{Z,k}}\normnuk{\phi - z_i}^2.
\]
These terms are the minimal error that can possibly be achieved by the basis $\LinSpace{Y,k}$ (resp. $\LinSpace{Z,k}$) in order to approximate the restriction $y_i(\cdot)|_{\cH_k}$ (resp. $z_i(\cdot)|_{\cH_k}$) in the $\L_2$ norm. Consequently, the global squared quadratic approximation error is given by
\begin{align}
\label{eq:TY:1i}
T^{Y}_{i}  := \sumk \nu(\cH_k)T^Y_{i,k} =\inf_{\phi\text{ s.t. }\phi|_{\cH_k} \in \LinSpace{Y,k}}\normnu{\phi - y_i}^2,\\
\label{eq:TZ:1i}
T^{Z}_{i}  :=\sumk \nu(\cH_k)T^Z_{i,k}=\inf_{\phi\text{ s.t. }\phi|_{\cH_k} \in \LinSpace{Z,k}}\normnu{\phi - z_i}^2.
\end{align}
As we shall see in Theorem \ref{thm:mc:err decomp} below, the terms $T^Y_i$ and $T^Z_i$ are closely associated to the limit of the expected quadratic error of the numerical scheme in the asymptotic $M\to \infty$; for this reason, these terms are usually called {\em bias} terms.


Now, we are in the position to state our main result giving non-asymptotic error estimates.\begin{theorem}[{Error for the Stratified LSMDP scheme}]
\label{thm:mc:err decomp}
Recall the constants $C_y$ from Proposition \ref{prop:bound}, $\Cas$ from Lemma \ref{cor: as 2}, and $\cnu$ from Proposition \ref{prop:hnu}.
For each $i \in \{0,\dots,N-1\}$, define
\begin{align*}
\cE(i)& := 2\sumji \Dt \Big(T^{Y}_{j}+3\Cas^2 \frac{\nuDimLinSpace{Y,.}  }{M }
+12168 L^2_f \Dt \frac{
 (\nuDimLinSpace{Z,.}+1)  qC_y^2 \log(3 M) }{  M}\\
&\hspace{2cm}+3T^{Z}_{j}+6q\Cas^2\frac{ \nuDimLinSpace{Z,.}}{\Dt  M}\Big)\\ &\hspace{5mm}+
(T-t_i)\frac{1014 C_y^2 \log(3M)}{M} \left(\left(\nuDimLinSpace{Y,.}+1\right)+\frac{ q }{\Dt }\left(\nuDimLinSpace{Z,.}+1\right)\right).
\end{align*}
For  $\Delta_t$ small enough such that $L_f \Dt\leq \sqrt{\frac{2 }{ 15}}$ and $\Delta_t L^2_f\leq \frac1{288 \cnu^2 \cprop (1+T)}$, we have, for all
$0\leq i \leq N-1$,
\begin{align}
\cE(Y,M,i) &\le  T^{Y}_{i}
 +3\Cas^2 \frac{\nuDimLinSpace{Y,.}  }{M }+ 12168 L^2_f \Dt \frac{
 (\nuDimLinSpace{Z,.}+1)  qC_y^2 \log(3 M) }{  M} \nonumber\\
&\quad +(1+15L_f^2 \Dt )\cgamma\cE(i),\label{eq:empi:y} \\
& \hspace{-1cm}\sumji  \Dt\cE(Z,M,j) \le
\cgamma \cE(i),
\label{eq:empi:z}
\end{align}
where $\cgamma := \exp(288 \cnu^2 \cprop (1+T)L^2_f T)$.
\end{theorem}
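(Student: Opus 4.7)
The proof will proceed by backward induction on $i$ starting from $i=N-1$, following the spirit of the analysis in \cite{gobe:turk:14} but adapted to the stratified setting. The key observation is that, within each stratum $\cH_k$, Algorithm \ref{alg:srmdp} is a standard OLS problem on $\LinSpace{Y,k}$ and $\LinSpace{Z,k}$ with $M$ samples drawn from $\nu_k$. Writing the empirical error decomposition for OLS stratum-by-stratum and then summing with weights $\nu(\cH_k)$ to rebuild the global quantities $\cE(Y,M,i)$, $\cE(Z,M,i)$ will be the backbone of the argument.

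The first step is, for fixed $i$ and $k$, to compare the observation $\ObsM{Y,i}$ of the scheme with the idealized target $\Obs{Y,i}(\vecx{x}_i):=g(x_N)+\sum_{j=i}^{N-1} f_j(x_j,y_{j+1}(x_{j+1}),z_j(x_j))\Dt$ built from the true value functions. The OLS orthogonality, applied on each stratum as in \cite[Proposition 4.10]{gobe:turk:14}, yields
\[
\Esp{\empinorm{\psim_{Y,i,k}-y_i}{i,k}^2} \leq T^Y_{i,k}+\frac{\Cas^2\DimLinSpace{Y,k}}{M}+\Esp{\empinorm{\E_i[\ObsM{Y,i}-\Obs{Y,i}\mid \vecx{X}^{i,\nu_k}]}{i,k}^2},
\]
with an analogous identity for $Z$ (with a $1/\Dt$ weight coming from $\Delta W_i$). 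Since truncation by $\cT_{C_y},\cT_{C_z}$ cannot enlarge the distance to $y_i,z_i$ (both are bounded by Proposition \ref{prop:bound}), the same bound holds for the truncated $\yM{i},\zM{i}$. Applying the Lipschitz property \eqref{eq:f:lip} to $\ObsM{Y,i}-\Obs{Y,i}$ bounds the propagation term by a sum of $\Esp{|\yM{j+1}(\X{i,\nu_k}{j+1})-y_{j+1}(\X{i,\nu_k}{j+1})|^2}$ and $\Esp{|\zM{j}(\X{i,\nu_k}{j})-z_j(\X{i,\nu_k}{j})|^2}$ terms for $j\geq i$, plus (for $Z$) a Cauchy-Schwarz loss of $1/\Dt$.

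The second step is summation over $k$ with weights $\nu(\cH_k)$. Because $\nu=\sum_k\nu(\cH_k)\nu_k$ by Remark \ref{rem:strat}, the weighted sum of expectations under the chain started from $\nu_k$ equals the expectation under the chain started from $\nu$, which activates Proposition \ref{prop:hnu} (USES). Thus, up to the multiplicative constant $\cnu$, each propagated term collapses to $\normnu{\yM{j+1}-y_{j+1}}^2=\bar\cE(Y,M,j+1)$ and similarly for $Z$. Invoking Proposition \ref{prop:eq:y:z:err:deco:M} (and absorbing its statistical residual inside the already-present $\log(3M)/M$ terms) then returns $\bar\cE$ to the empirical $\cE$, so the whole recursion closes entirely in terms of $\cE(Y,M,\cdot)$ and $\cE(Z,M,\cdot)$.

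The third step is to unwind the recursion. Collecting all ingredients produces an inequality of the form
\[
\cE(Y,M,i)+\alpha\sumji\Dt\,\cE(Z,M,j)\leq A_i+\beta\sumji\Dt\big(\cE(Y,M,j)+\cE(Z,M,j)\big),
\]
for suitable $\alpha,\beta$ depending on $L_f,\cnu,\cprop$, where $A_i$ aggregates the bias $T^Y_j,T^Z_j$ and variance/log terms that make up $\cE(i)$. The smallness hypothesis $\Dt L_f^2\leq 1/(288\cnu^2\cprop(1+T))$ is exactly what is needed to absorb the $\cE(Z,M,j)$ contributions on the right into the left hand side (using $L_f\Dt\leq\sqrt{2/15}$ to handle the quadratic $f$-coupling), after which a discrete Gr\"onwall inequality on $i\mapsto\cE(Y,M,i)$ produces the factor $\cgamma=\exp(288\cnu^2\cprop(1+T)L_f^2T)$.

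The main obstacle is bookkeeping the propagation term: one must carefully track how the USES constant $\cnu$ enters only once per level (thanks to summation over strata rebuilding the $\nu$-started chain), and isolate a term of size $L_f^2\Dt$ that can be absorbed by Gr\"onwall without destroying the sharp factors $3\Cas^2/M$ and the $q/\Dt$ weight on the $Z$-variance. The rest of the analysis—OLS bias/variance bounds within a stratum, truncation arguments, and Lipschitz estimates on $f$—is standard and parallels \cite[Theorem 4.11]{gobe:turk:14}.
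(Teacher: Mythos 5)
Your overall architecture --- stratum-by-stratum OLS bias/variance decomposition, reassembly of the global norms via $\normnu{\cdot}^2=\sum_k\nu(\cH_k)\normnuk{\cdot}^2$, the USES property of Proposition \ref{prop:hnu} to switch between $\normnu{\cdot}$ and the law of the chain, Proposition \ref{prop:eq:y:z:err:deco:M} to return from $\bar\cE$ to $\cE$, and a final absorption under the smallness condition on $\Dt L_f^2$ --- coincides with the paper's. But there is a genuine gap, and it sits exactly where you write ``plus (for $Z$) a Cauchy--Schwarz loss of $1/\Dt$.'' If you bound the $Z$-propagation term $\xiM{Z,i}{\cdot}=\frac1{\Dt}\E[(\ObsM{Y,i+1}{\vecx{X}_i}-\Obs{Y,i+1}{\vecx{X}_i})\Delta W_i\mid X_i,\F M{}]$ by Cauchy--Schwarz and then the Lipschitz property of $f$, you get $\Esp{\normnu{\xiM{Z,i}{\cdot}}^2}\lesssim \frac{q T L_f^2}{\Dt}\sum_{j>i}\Dt\,\E[|\yM{j+1}{\cdot}-y_{j+1}|^2+|\zM{j}{\cdot}-z_j|^2]$. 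Multiplying by $\Dt$ and summing over the time index --- which is unavoidable, since $\sumji\Dt\,\cE(Z,M,j)$ is both the quantity estimated in \eqref{eq:empi:z} and the quantity you must control to close the $Y$-recursion (the driver differences in $\xiM{Y,i}{\cdot}$ involve $|\zM{j}{\cdot}-z_j|$ for all $j\ge i$) --- leaves a coefficient of order $qT^2L_f^2/\Dt$ in front of the Gr\"onwall sum. This blows up as $\Dt\to0$ and cannot be absorbed into the left-hand side however small $\Dt$ is, so the recursion in your third step does not close with ``suitable $\alpha,\beta$'' depending only on $L_f,\cnu,\cprop$.

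The paper's essential device to avoid this is to observe that $(\xiM{Y,j}{\xnuz_j},\xiM{Z,j}{\xnuz_j})_{0\le j\le N-1}$ is itself the solution of a discrete BSDE with terminal condition $0$ and driver \eqref{eq:aggregation:stability:bsde}, so that the a priori stability estimate of Proposition \ref{prop: apriori} bounds $\sumji\Dt\,\big(\Esp{|\xiM{Y,j}{\xnuz_j}|^2}+\Esp{|\xiM{Z,j}{\xnuz_j}|^2}\big)\Gamma_j$ with the factor $\cprop(1+T)(\frac1\gamma+\Dt)L_f^2$ --- no $1/\Dt$ --- because the martingale/orthogonality structure of the increments $\Delta W_j$ makes the weighted sum over $j$ telescope instead of accumulating. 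This is the one non-routine ingredient of the proof, and it is precisely the step your proposal replaces by an estimate that fails. The remaining discrepancies are cosmetic: you should condition on $X_i$ (and the cloud $\sigma$-algebra) rather than on the whole path, and the paper does not literally induct backward and then invoke a discrete Gr\"onwall lemma, but introduces the weights $\Gamma_j=(1+\gamma\Dt)^j$ up front with $\gamma$ as in \eqref{eq:choice:gamma} and absorbs the weighted sum directly; this is equivalent and yields the same constant $\cgamma$.
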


 \subsection{Proof of Theorem \ref{thm:mc:err decomp}}
\label{subsection:proof:main:thm}
We start by obtaining estimates on the {\em local} empirical quadratic errors terms 
\[
\Esp{\empinorm{\yM {i} \cdot - y_i(\cdot)}{i,k}^2}, \quad \Esp{\empinorm{\zM {i} \cdot - z_i(\cdot)}{i,k}^2},
\]
on each of the hypercubes $\cH_k$ ($k=1,\ldots,K$).
We first reformulate \eqref{eq:MDP:fcs} with $\eta=\nu_k$ in terms of the Definition \ref{def:ls} of OLS.
For each $i\in \{0,\dots,N-1\}$ and $k\in \{1,\dots,K\}$, let $\nu_{i,k} := \P \circ (\Delta W_i,X^{i,\nu_k}_i ,\dots,X^{i,\nu_k}_N)^{-1}$, so that
we have
\begin{align}
\left\{
\begin{array}{l}
 y_i(\cdot)|_{\cH_k}  \text{ solution of }
\OLS( \ \Obs{Y,i}{\vecx{x}_i} \ , \ \LinSpace{ k}^{(1)}
\ , \ \nu_{i,k} \ ) \\
\hspace{8mm} \displaystyle \ \text{where} \ \Obs{Y,i}{\vecx{x}_i}  := g(x_N) + \sum_{j=i}^{N-1} f_j\big(x_j,y_{j+1}(x_{j+1}), z_j(x_j)\big)\Dt,\\
 z_i(\cdot)|_{\cH_k} \text{ solution of }
\OLS( \ \Obs{Z,i}{w,\vecx{x}_i} \ , \  \LinSpace{ k}^{(q)}
\ , \ \nu_{i,k} \ )\\[2mm]
\hspace{8mm} \displaystyle \ \text{where} \ \Obs{Z,i}{w,\vecx{x}_i} :=  \frac 1{\Dt}\Obs{Y,i+1}{\vecx{x}_i}\ w,
\end{array}
\right.
\label{eq:MDP:ls:def}
\end{align}
where $w \in \R^q$, $\vecx{x}_i := (x_i,\dots,x_N) \in (\R^d)^{N-i+1}$ and where   $\LinSpace{k}^{(l')}$ is any dense separable subspace  in the $\R^{l'}$-valued functions belonging to $\L_2(\cB(\cH_k) ,  \nu_k )$.
The above OLS solutions and those defined in \eqref{eq:PsiM} will be compared with other intermediate OLS solutions given by
\begin{equation}
\label{eq:intermediate:empirical}
\left\{\begin{array}{l}
\displaystyle \PsiM{Y,i,k}{\cdot}  \quad \text{solution of} \quad
\OLS(\ \Obs {Y,i}{\vecx{x}_i} \ ,  \ \LinSpace{Y,k} \ , \ \nu_{i,k,M}),
 \\[2mm]
 \displaystyle \PsiM{Z,i,k}{\cdot} \quad \text{solution of} \quad
\OLS( \ \Obs{Z,i}{w,\vecx{x}_i} \ , \ \LinSpace{Z,k} \ , \ \nu_{i,k,M}).
\end{array}\right.
\end{equation}

In order to handle the dependence on the simulation clouds, we define the following $\sigma$-algebras.
\begin{definition}
\label{def:sim cnd}
Define the $\sigma$-algebras
\begin{align*}
\F *i   :=   \sigma(\cC_{i+1,k},\dots,{\cC_{N-1,k}}:1\leq k \leq K),\quad
\F {M}{i,k}  :=  \F *i \vee \sigma(\X{i,k,m}{i} : \; 1 \le m \le M).
\end{align*}
For every $i\in\{0,\ldots,N-1\}$ and $k\in\{1,\dots,K\}$, let
$\EspMik{\cdot}$ (resp. $\P^M_{i,k}\left(\cdot\right)$) with respect to $\F M{i,k}$.
\end{definition}
Defining additionally the functions
\begin{align*}
\xiM{Y,i}x &:= \Esp{\ObsM{Y,i}{\vecx{X}_i}-\Obs{Y,i}{\vecx{X}_i}\mid X_i=x,\F M{}},\\
\xiM{Z,i}x&:=\Esp{\ObsM{Z,i}{\Delta W_i,\vecx{X}_i}-\Obs{Z,i}{\Delta W_i,\vecx{X}_i}\mid X_i=x,\F M{}},
\end{align*}
now we are in the position to prove that
\begin{align}
\Esp{\empinorm{ y_i(\cdot) - \yM{i}{\cdot} } {i,k}^2}
&\le   T^{Y}_{i,k}
 + 6 \Esp{\normnuk{  \xiM{Y,i}{\cdot}} ^2} +3\Cas^2 \frac{\DimLinSpace{Y,k}  }{M } \, ,\\
&\hspace{0cm}  \quad +15L_f^2 \Dt^2 \Esp{\empinorm{ z_i(\cdot) - \zM i\cdot}{i,k} ^2}\\
& + 12168 L^2_f \Dt \frac{
 (\DimLinSpace{Z,k}+1)  qC_y^2 \log(3 M) }{  M}
\label{eq:y:empi:decom},\\
\Esp{\empinorm{ z_i(\cdot) - \zM{i}{\cdot} } {i,k}^2}
& \le  T^{Z}_{i,k}
 + 2 \Esp{\normnuk{\xiM{Z,i}{\cdot}}^2}
 +2q\Cas^2\frac{ \DimLinSpace{Z,k}}{\Dt  M}.
 \label{eq:z:empi:decom}
\end{align}
In fact, the proof of \eqref{eq:y:empi:decom}--\eqref{eq:z:empi:decom} follows analogously the proof of \cite[(4.12)--(4.13)]{gobe:turk:14};
in order to follow the steps of that proof, one must note that the term $R_\pi$ of that paper is equal to $1$ here, $C_\pi$ is equal to $\Dt$, and $\theta_L=1$.
Moreover, one must exchange all norms, \OLS \ problems, $\sigma$-algebras, and empirical functions from the reference to the localized versions defined in the preceding paragraphs.
Indeed, the proof method of \cite[(4.12)--(4.13)]{gobe:turk:14} is model free in the sense that it does not care about the distribution of the Markov chain at time $t_i$.

We now aim at aggregating the previous estimates across the strata and  propagating them along time. For this, let
\begin{align}
\nonumber\cE_1(i) := \sumji \Dt \Big( &T^{Y}_{j}+3\Cas^2 \frac{\nuDimLinSpace{Y,.}  }{M }
+12168 L^2_f \Dt \frac{
 (\nuDimLinSpace{Z,.}+1)  qC_y^2 \log(3 M) }{  M}\\
&\qquad +3T^{Z}_{j}+6q\Cas^2\frac{ \nuDimLinSpace{Z,.}}{\Dt  M}\Big) \Gamma_j,
\label{eq:E1}
\end{align}
where $\Gamma_i :=(1+\gamma \Dt)^i$ with $\gamma$ to be determined below.
Next, defining
\begin{equation}
\label{eq:choice:gamma}
\gamma := 288 \cnu^2 \cprop (1+T)L^2_f.
\end{equation}
and recalling that $\Delta_t L^2_f\leq \frac1{288 \cnu^2 \cprop (1+T)}$, then $\gamma$ and $\Dt$ satisfy
\begin{align}
\label{eq:constraint:1:gamma}\qquad 
\max\left(\frac{ 1 }{\gamma } \times 12  \cnu^2 \cprop (1+T)L^2_f ,
\Dt\times 12  \cnu^2 \cprop (1+T) L^2_f \right) \leq \frac{ 1 }{6 }\times\frac{ 1 }{4 }.
\end{align}
Additionally, $\Gamma_i \leq \exp(\gamma T):=\cgamma$ for every $0\leq i\leq N$.
Now, multiply \eqref{eq:y:empi:decom} and \eqref{eq:z:empi:decom} by $\nu(\cH_k)\Dt \Gamma_i$ and sum them up over $i$ and $k$ to ascertain that
{\small
\begin{align}
\label{eq:aggregation:0}
\sumji \Dt\cE(Y,M,j)  &\Gamma_j +\sumji \Dt\cE(Z,M,j)  \Gamma_j \\
\leq\sumji \Dt &\left(T^{Y}_{j}+3\Cas^2 \frac{\nuDimLinSpace{Y,k}  }{M }+12168 L^2_f \Dt \frac{
 (\nuDimLinSpace{Z,k}+1)  qC_y^2 \log(3 M) }{  M}\right) \Gamma_j\\
&\hspace{-23mm}+\sumji \Dt \left\{ \left(T^{Z}_{j}+2q\Cas^2\frac{ \nuDimLinSpace{Z,k}}{\Dt  M} +2\Esp{\normnu{\xiM{Z,j}{\cdot}}^2} \right)(1+15L^2_f \Dt^2) + 6 \Esp{\normnu{  \xiM{Y,j}{\cdot}} ^2} \right\} \Gamma_j \\
&\hspace{-15mm}\leq\cE_1(i)+ 6 \sumji \Dt \left( \Esp{\normnu{  \xiM{Y,j}{\cdot}} ^2}+\Esp{\normnu{\xiM{Z,j}{\cdot}}^2}\right) \Gamma_j \, ,
\label{eq:aggregation:1}
\end{align}
}
where we have used $(1+15L^2_f \Dt^2)\leq 3$ (since $L_f \Dt\leq \sqrt{\frac{2 }{ 15}}$), and the term $\cE_1$ from \eqref{eq:E1} above.
Next, from Proposition \ref{prop:hnu}, we have
\begin{equation}
\label{eq:aggregation:cnu}
\Esp{\normnu{  \xiM{Y,j}{\cdot}} ^2}+\Esp{\normnu{\xiM{Z,j}{\cdot}}^2}
\leq \cnu\left(\Esp{| \xiM{Y,j}{\xnuz_j}| ^2}+
\Esp{| \xiM{Z,j}{\xnuz_j}| ^2}\right).
\end{equation}
Furthermore, note 
 that $(\xiM{Y,j}{\xnuz_j},\xiM{Z,j}{\xnuz_j}:0\leq j \leq N-1)$ solves a discrete BSDE (in the sense of Appendix \ref{section:apriori}) with terminal condition $0$ and driver
\begin{equation}
\label{eq:aggregation:stability:bsde}
f_{\xi^*,j} (y,z) :=  f_j(\xnuz_j,\yM {j+1}{\xnuz_{j+1}}, \zM j{\xnuz_j} )-f_j(\xnuz_j,y_{j+1}(\xnuz_{j+1}), z_j(\xnuz_j)).
\end{equation}
This allows the application of Proposition \ref{prop: apriori}, with the first BSDE $(\xiM{Y,j}{\xnuz_j},\xiM{Z,j}{\xnuz_j}:0\leq j \leq N-1)$, and the second one equal to 0:  since $L_{  f_{2}}=0$, any choice of $\gamma>0$ is valid and we take $\gamma$ as in \eqref{eq:choice:gamma}.
We obtain
\begin{align}
\label{eq:aggregation:2}
& \sumji \Dt \left( \Esp{\normnu{  \xiM{Y,j}{\cdot}} ^2}+\Esp{\normnu{\xiM{Z,j}{\cdot}}^2}\right) \Gamma_j\\
&\leq   6 \cnu \cprop (1+T) \left(\frac{ 1 }{\gamma } + \Dt\right) L^2_f
\sumji   \Dt   \\
&\times \left[\Esp{|\yM {j+1}{\xnuz_{j+1}}-y_{j+1}(\xnuz_{j+1})|^2} +\Esp{|\zM {j}{\xnuz_{j}}-z_{j}(\xnuz_{j})|^2}\right]\Gamma_j.
\end{align}
Now, Proposition \ref{prop:hnu} yields to
\begin{align}
&\Esp{|\yM {j+1}{\xnuz_{j+1}}-y_{j+1}(\xnuz_{j+1})|^2} +\Esp{|\zM {j}{\xnuz_{j}}-z_{j}(\xnuz_{j})|^2} \\
&\le \cnu[\bar \cE(Y,M,j+1)+\bar \cE(Z,M,j)]\\
&\le 2\cnu[\cE(Y,M,j+1)+\cE(Z,M,j)] + \cnu \frac{2028 C_y^2 \log(3M)}{M} \left(\nuDimLinSpace{Y,.}+1\right) \\
& \qquad + \cnu \frac{2028   qC_y^2 \log(3M) }{\Dt M}\left(\nuDimLinSpace{Z,.}+1\right) \, ,
\end{align}
where the last inequality follows from the concentration-measure inequalities in Proposition \ref{prop:eq:y:z:err:deco:M}.
In order to summarize this, we define
\begin{align}
\cE_2(i):= \frac{1014 C_y^2 \log(3M)}{M}  \left(\sumji \Dt\Gamma_j\right) \Bigg(\left(\nuDimLinSpace{Y,.}+1\right)+\frac{q }{\Dt }\left(\nuDimLinSpace{Z,.}+1\right)\Bigg)
\label{eq:ce2i}
\end{align}
and make use of  \eqref{eq:constraint:1:gamma}
, and that $\Gamma_j \le \Gamma_{j+1}$ in order to ascertain that we have
 \begin{align*}
& \sumji \Dt \left( \Esp{\normnu{  \xiM{Y,j}{\cdot}} ^2}+\Esp{\normnu{\xiM{Z,j}{\cdot}}^2}\right) \Gamma_j\\
&\leq   12 \cnu^2 \cprop (1+T) \left(\frac{ 1 }{\gamma } + \Dt\right) L^2_f
\left[\sumji   \Dt  \left(\cE(Y,M,j)+\cE(Z,M,j)\right)\Gamma_j+\cE_2(i)\right]\\
&\leq   \frac{ 1 }{6 } \times\frac{ 1 }{2 }
\left[\sumji   \Dt  \left(\cE(Y,M,j)+\cE(Z,M,j)\right)\Gamma_j+\cE_2(i)\right].
\end{align*}
By plugging this into \eqref{eq:aggregation:1} readily yields to
\begin{align*}
\sumji \Dt\cE(Y,M,j)  \Gamma_j
&+\sumji \Dt\cE(Z,M,j)  \Gamma_j
 \\
&\leq\cE_1(i)+ \frac{ 1 }{2 }
\left[\sumji   \Dt  \left(\cE(Y,M,j)+\cE(Z,M,j)\right)\Gamma_j+\cE_2(i)\right]
\end{align*}
and therefore
\begin{align}
\sumji \Dt\cE(Y,M,j)  \Gamma_j +\sumji \Dt\cE(Z,M,j)  \Gamma_j
\leq2\cE_1(i)+ \cE_2(i).
\label{eq:aggregation:4}
\end{align}
This completes the proof of the estimate \eqref{eq:empi:z} on $z$ as stated in Theorem \ref{thm:mc:err decomp}, using $1\leq \Gamma_i\leq \cgamma$ and $2\cE_1(i)+\cE_2(i)\leq \cgamma \cE(i)$.
It remains to derive  \eqref{eq:empi:y}. 
Starting from \eqref{eq:y:empi:decom}, multiplying  by $\nu(\cH_k)$ and summing over $k$ yields to
\begin{align}
\nonumber\cE(Y,M,i)
\le&   T^{Y}_{i}
 + 6 \Esp{\normnu{  \xiM{Y,i}{\cdot}} ^2} +3\Cas^2 \frac{\nuDimLinSpace{Y,.}  }{M }\\
\nonumber& +15L_f^2 \Dt (2\cE_1(i)+\cE_2(i))\\
& + 12168 L^2_f \Dt \frac{
 (\nuDimLinSpace{Z,.}+1)  qC_y^2 \log(3 M) }{  M}
 \label{eq:aggregation:5}
 \end{align}
 where we use the inequality \eqref{eq:aggregation:4} to control $\Dt\cE(Z,M,i)$. Using the same arguments as before, 
 we upper bound $\Esp{\normnu{  \xiM{Y,i}{\cdot}} ^2}$ by
\begin{align*}
6 \cnu^2 \cprop  \left(\frac{ 1 }{\gamma } + \Dt\right) L^2_f
\sumji   \Dt  \left(\bar\cE(Y,M,j)+\bar\cE(Z,M,j)\right)\Gamma_j.
\end{align*}
By additionally  bounding $\bar\cE(Y,M,j)$ and $\bar\cE(Z,M,j)$ using the concentration-measure inequalities of Proposition \ref{prop:eq:y:z:err:deco:M} and plugging this in \eqref{eq:aggregation:5}, we finally obtain
%
\begin{align*}
\cE(Y,M,i)& \le  T^{Y}_{1,i}
 +3\Cas^2 \frac{\nuDimLinSpace{Y,.}  }{M }+15L_f^2 \Dt \left(2\cE_1(i)+ \cE_2(i)\right)\\
&
 + 12168 L^2_f \Dt \frac{
 (\nuDimLinSpace{Z,.}+1)  qC_y^2 \log(3 M) }{  M}\\
& + 72 \cnu^2 \cprop  \left(\frac{ 1 }{\gamma } + \Dt\right) L^2_f
\left[\sumji   \Dt  \left(\cE(Y,M,j)+\cE(Z,M,j)\right)\Gamma_j+\cE_2(i)\right].
\end{align*}
 From \eqref{eq:constraint:1:gamma} 
 and \eqref{eq:aggregation:4}, the last term in previous inequality is bounded by
\begin{align*}
\left(\frac{ 1 }{4(1+T) }+\frac{ 1 }{4(1+T) }\right)\left(2\cE_1(i)+\cE_2(i)+\cE_2(i)\right)\leq \cE_1(i)+\cE_2(i)\leq2\cE_1(i)+\cE_2(i).
\end{align*}
This completes the proof of \eqref{eq:y:empi:decom}, using again $2\cE_1(i)+\cE_2(i)\leq \cgamma \cE(i)$.
\qed

\section{GPU implementation}
\label{section:GPU}
In this section, we consider the computation of $\yM i\cdot$  for a given stratum $\cH_k$ and  time point $i$.
The calculation of $\zM i\cdot$ is rather similar, only requiring component-wise calculations  to be taken into account, so that we do not provide details.
The theoretical description of the calculation was given in \S \ref{subsection:algorithm}.
In this section, we first describe the required computations to implement the approximations with \LPz \ and \LPo \ local polynomials in \S \ref{sec:explicit coefs}, and then present their implementation on the GPU in \S \ref{sec:GPU pseudo}.

\footnote{
Theoretically, we are not restricted from going to higher order local polynomials.
We restrict to \LPo \ for  implementation in GPU due to memory limitations. In our forthcoming numerical examples, the GPU's global memory is limited to 6GB. Higher order polynomials would require not only more memory per hypercubes but also more memory for storing the regression coefficients, therefore we would be able to estimate over fewer hypercubes in parallel. 
Note that this is not an issue for parallel computing on CPUs.
}

\subsection{Explicit solutions to \OLS \ in Algorithm \ref{alg:srmdp}}
\label{sec:explicit coefs}
\paragraph{\LPz}
This piecewise solution is given by the simple formula \cite[Ch. 4]{gyor:kohl:krzy:walk:02}
\begin{equation}
\label{eq:lp0:explicit}
\yM i\cdot |_{\cH_k} = \cT_{C_y}\left( \frac{\sum_{m=1}^{M } \ObsM {Y,i}{\vecX{i,k,m} {i}   }  }{M} \right).
\end{equation}
Observe that there will be a memory consumption of $O(1)$ per hypercube to store the simulations needed for the computation of $\ObsM {Y,i}{\vecX{i,k,m} {i}}$.
Once added in the sum \eqref{eq:lp0:explicit}, their allocation can be freed.

\paragraph{\LPo}
Let $A$ be the $\R^{M} \otimes \R^{d+1}$ matrix, the components of which are given by  $A[m,j] = 1 \1_{\{0\}} (j) + \X{i,k,m}{i,j} \1_{\{0\}^c}(j)$, where $ \X{i,k,m}{i,j}$ is the $j$-th component of $ \vecX{i,k,m}{i}$, and let $S$ be the $\R^{M}$ vector given by $S[m] = \ObsM {Y,i} {\vecX{i,k,m}i}$.
 In order to compute $\yM i\cdot|_{\cH_k}$, we first perform a QR-factorization $A=QR$, where $Q$ is an $\R^{M}\otimes\R^{M}$ orthogonal matrix, and $R$ is an $\R^{M} \otimes \R^{d+1}$ upper triangular matrix. The computational cost to compute this factorization is $ (d+1)^2\left(M - (d+1)/3 \right)$ flops  using the Householder reflections method \cite[Alg. 5.3.2]{golu:vanl:96}. Using the form of \LPo\ and the density of $\nu_k$, we can prove that the rank of $A$ is $d+1$ with probability  1, i.e. $R$ is invertible a.s. (the OLS problem is non-degenerate).

Then, we obtain the approximation $\yM i\cdot|_{\cH_k}$ by computing the coefficients $\alpha = (\alpha_0,\ldots, \alpha_d ) \in\R^{d+1}$ using the QR factorization and backward-substitution method as follows:
\begin{equation}
\label{eq:lp1:explicit}
R \alpha = Q^\top S, \qquad
\yM i{x(k)} = \cT_{C_y} \left( \alpha_0 + \sum_{j=1}^{d} \alpha_{j} \times x_j(k) \right) \, ,
\end{equation}
for any vector $\left(x(k) = (x_1(k),\ldots,x_d(k)\right)$ in $\cH_k$. By using the Householder reflection algorithm for computing the QR-factorization, there will be memory consumption of $O\left(M\times(d+1) \right)$ for the storage of the matrix $A$  on each hypercube.
This memory can be deallocated once the computation \eqref{eq:lp1:explicit} is completed.
We remark that the memory consumption is considerably lower than other alternative QR-factorization methods, as for example the Givens rotations method \cite[Alg. 5.2.2]{golu:vanl:96}, which requires a memory consumption $O(M^2)$ to store the matrix $Q$. This reduced memory consumption is instrumental in the GPU approach, as we explain in forthcoming \S \ref{section:num:lp1}.

\subsection{Pseudo-algorithms for GPU}
\label{sec:GPU pseudo}
Algorithm \ref{alg:srmdp} will be implemented on an NVIDIA GPU device.
The device architecture is built around a scalable array of multithreaded Streaming Multiprocessors (SMs);
each multiprocessor is designed to execute hundreds of threads concurrently.
To manage such a large amount of threads, it employs a unique architecture called SIMT (Single-Instruction, Multiple-Thread).
The code execution unit is called a kernel and is executed simultaneously on all SMs by independent blocks of threads.
Each thread is assigned to a single processor and executes within its own execution environment.
Thus, all threads run the same instruction at a time, although over different data.
In this section we briefly describe pseudo-codes for the Algorithm \ref{alg:srmdp}.

The algorithm has been programmed using the Compute Unified Device Architecture (CUDA) toolkit, specially designed for NVIDIA GPUs, see \cite{CUDA:programmingGuide}.
The code was built from an optimized \textrm{C} code.
The below pseudo-algorithms reflect this programming feature.
For the generation of the random numbers in parallel we took advantage of the NVIDIA CURAND library, see \cite{CUDA:curandLibrary}.


The time loop corresponding to the backward iteration of  Algorithm \ref{alg:srmdp} is shown in  Listing \ref{list:timeLoop};
the kernel corresponds to the use of either the \LPz \ or the \LPo \ basis.
In Listing \ref{list:kernelLP0}, a sketch for the  \LPz \ kernel is given. Notice that we are paralellizing the loop for any stratum index $k \in \{1,\ldots,K\}$ in the Algorithm \ref{alg:srmdp};
the terms $\ObsM{Y,i}{\vecx{x}_i}$ and $\ObsM{Z,i}{w,\vecx{x}_i}$ are computed in the \texttt{compute\_responses\_i} function, and the coefficients for  $\psim_{Y,i,k}(\cdot)$ and $\psim_{Z,i,k}(\cdot)$ are computed in \texttt{compute\_psi\_Y} and \texttt{compute\_psi\_Z}, respectively, according to \eqref{eq:lp0:explicit}.
Having in view an optimal performance, matrices storing the simulations, responses and regression coefficients are fully interleaved, thus allowing coalesced memory accesses, see \cite{CUDA:programmingGuide}. Note that all device memory accesses are coalesced except those accesses to the regression coefficients in the resimulation stage during the computation of the responses, because one is not certain in which hypercube up each path will land.
In Listing \ref{list:kernelLP1}, the sketch for the  \LPo \ kernel   is given. Additionally to the tasks of the kernel in Listing \ref{list:kernelLP0}, each thread builds the matrix $A$ and applies a QR factorization, as detailed in \S \ref{sec:explicit coefs}. Note that in addition to the matrices just explained in \LPz, the matrix $A$ is fully interleaved thus allowing fully coalesced accesses. The global memory is allocated at the beginning of the program and is freed at the end, thus allowing kernels to reuse already-allocated memory wherever possible. In addition to global memory, kernels are also using local memory, for example for storing the resimulated forward paths used for computing the responses.
The coefficients for $\psim_{Y,i,k}(\cdot)$ and $\psim_{Z,i,k}(\cdot)$ are computed according to \eqref{eq:lp1:explicit}.

\begin{lstlisting}[style=C,frame=single,label=list:timeLoop,caption={Backward iteration for $i=N-1$ to $i=0$}.]
int i
curandState *devStates
Initialize devStates
Initialize n_blocks, n_threads_per_block
for(i=N-1; i>=0; i--)
	kernel_bsde<<<n_blocks,n_threads_per_block>>>(i, devStates, ...)
\end{lstlisting}

\begin{lstlisting}[style=C,frame=single,label=list:kernelLP0,caption={Kernel for the approximation with \LPz}.]
__global__ void kernel_bsde_LP0(int i, curandState* devStates, ...) {
	const unsigned int global_tid = blockDim.x * blockIdx.x + threadIdx.x
	curandState localState = devStates[global_tid]

	unsigned long long int bin
	for(bin=global_tid; bin<K; bin+=n_blocks*n_threads_per_block) {
		simulates_x(&localState, global_tid, bin, ...)

		compute_responses_i(&localState, global_tid, i, ...)

		compute_psi_Z(global_tid, bin, i, ...)

		compute_psi_Y(global_tid, bin, i, ...)
	}
	devStates[global_tid] = localState
}
\end{lstlisting}

\begin{lstlisting}[style=C,frame=single,label=list:kernelLP1,caption={Kernel for the approximation with \LPo}.]
__global__ void kernel_bsde_LP1(int i, curandState *devStates, ...) {
	const unsigned int global_tid = blockDim.x * blockIdx.x + threadIdx.x
	curandState localState = devStates[global_tid]

	unsigned long long int bin
	for(bin=global_tid;bin<K;bin+=n_blocks*n_threads_per_block) {
		simulates_x(&localState, global_tid, bin, ...)

		compute_responses_i(&localState, global_tid, i, ...)

		build_d_A(global_tid, d_A, ...)

		qr(global_tid, d_A, ...)

		compute_psi_Z(global_tid, bin, i, d_A, ...)

		compute_psi_Y(global_tid, bin, i, d_A, ...)
	}
	devStates[global_tid] = localState
}
\end{lstlisting}

\subsection{Theoretical complexity analysis}
\label{subsection:theoreticalComplexityAnalysis}
In this section, we assume that the functions $y_i(\cdot)$ and $z_i(\cdot)$ are smooth, namely globally Lipschitz (resp. $C^{1}$ and the first derivatives are globally Lipschitz) in the \LPz\ (resp. \LPo) case.
The strata will be composed of uniform hypercubes of side length $\delta>0$ in the domain $[-L,L]^d$, where $L=\log(N)/\mu $ and $\mu$ is the parameter of the logistic distribution.  This choice ensures $\nu\left(\R^d\backslash [-L,L]^d\right)\leq 2d \exp(-  \mu  L)=O(N^{-1})$.
Our aim is to calibrate the numerical parameters (number of simulations and number of strata) so that the error given in Theorem \ref{thm:mc:err decomp} is $O(N^{-1})$, where $N$ is the number of time-steps. This tolerance error is the one we usually obtain after time discretization with $N$ time points \cite{zhan:04,gobe:makh:10,turk:13b}. In the following, we focus on polynomial dependency w.r.t. $N$, keeping only  the highest degree, ignoring constants and $\log(N)$ terms.

\textit{Squared bias errors $T^{Y}_{1,i}$ and $T^{Z}_{1,i}$ in \eqref{eq:TY:1i}-\eqref{eq:TZ:1i}.} 
First, we remark that the approximation error of the numerical scheme, namely the error due to basis selection, depends principally on the size $\delta$ of strata.
In the case of \LPz, the squared bias error is proportional to the squared hypercube diameter plus the tail contribution, i.e. $O\left(\delta^2+\nu\left(\R^d\backslash[-L,L]^d\right)\right)$;
to calibrate this bias to $O(N^{-1})$, we require $\delta = O(N^{-1/2})$.
In contrast, the squared bias in $[-L,L]^d$ using \LPo \ is proportional to the fourth power  of the hypercube diameter, whence $\delta = O(N^{-1/4})$.
As a result, ignoring the $\log$ terms the number of required hypercubes is
\begin{align}
&\LPz: \quad K =  O(N^{d/2}), &
& \LPo: \quad K = O(N^{d/4}),
\end{align}
in both cases.

\paragraph{Statistical and interdependence errors}
These error terms depend on the number of local polynomials, as well as on the number of simulations.
Indeed, denoting $K'=\DimLinSpace{Y\text{ or }Z,.}$ the number of local polynomials and $M$ the number of simulations in the hypercube, then both errors are dominated by $O\left( N K' \log(M) / M\right)$, where the factor  $N$ comes from the $Z$ part of the solution (see $\cE(i)$ in Theorem \ref{thm:mc:err decomp}).
For \LPz\ (resp. \LPo), $K'=1$ or $q$ (resp. $K'=d+1$ or $q(d+1)$).
This implies to select
\begin{align}
&\LPz: \quad M = O( N^2) , &
&\LPo: \quad M = O(N^2) ,
\end{align}
again omitting the $\log$ terms.

\paragraph{Computational cost}\newcommand\Cost{{\cal C}_{{\rm cost}}}
The computational cost (in flops) of the simulations per hypercube is equal to $O(M \times N)$, because we simulate $M$ paths (of length $N$) of the process $X$.
The cost of the regression per hypercube is  $O\left( M\times N \right)$, see \S \ref{sec:explicit coefs}, and thus equivalent to the simulation cost.
Putting in the values of $M$ from the last paragraph,  the overall computational cost $\Cost$ (summed over all hypercubes and time steps) is
\begin{align}
&\LPz:  \quad \Cost^{\tt SEQ} = O(N^{ 4+d/2} ) , &
&\LPo: \quad \Cost^{\tt SEQ} = O( N^{ 4+d/4}) .
\end{align}
This quantity is related to the computational time for a sequential system (SEQ implementation) where there is no parallel computing.
For the GPU implementation, described in \S \ref{sec:GPU pseudo}, there is an additional computational time improvement since the computations on the hypercubes will be threaded across the cores of the card. Thus, the computational cost on GPU is
\begin{align}
&\LPz:  \quad \Cost^{\tt GPU} = O(N^{ 4+d/2} ) /\LoadF, &
&\LPo: \quad \Cost^{\tt GPU} = O( N^{ 4 +d/4})/\LoadF .
\end{align}
where the load factor $\LoadF$ is ideally the number of threads on the device.

Finally, we quantify the improvement in memory consumption offered by the SRMDP algorithm compared to the LSMDP algorithm of \cite{gobe:turk:14}.
This is a very important improvement, because, as explained in the introduction, the memory is the key constraint in solving problems in high dimension.
We only compare sequential versions of the algorithms, meaning that the computational costs will be the same.
The main difference between the two schemes is then in the number of simulations that must be stored in the machine at any given time.
We summarize this in Table \ref{table:cost} below.
\begin{table}[htb]
\begin{center}
\footnotesize
\begin{tabular}{|c|c|c|c|c|c|}
\hline
Algorithm &\multicolumn{2}{c|}{Number of}& \multicolumn{2}{c|}{Computational }\\
 & \multicolumn{2}{c|}{simulations}& \multicolumn{2}{c|}{ cost }\\
 & \LPz & \LPo&\LPz & \LPo\\
\hline SRMDP &$N^{2}$&$N^{2}$&$N^{ 4+d/2}$&$N^{ 4+d/4}$\\
LSMDP &$N^{2+d/2}$&$N^{2+d/4}$&$N^{4+d/2}$&$N^{4+d/4}$\\
\hline
\end{tabular}
\end{center}
\caption{\label{table:cost}Comparison of numerical parameters with or without stratified sampling, as a function of $N$.}
\end{table}

In SRMDP, the memory consumption is mainly related to storing  coefficients representing the solutions on hypercubes, that is $O( N\times \DimLinSpace{Y\text{ or }Z,.}\times K)$;
if one is using the  \LPo \ basis, one must also take into account the memory consumption per strata $M\times(d+1) = O(N^2)$ for the QR factorization, explained in \S \ref{sec:explicit coefs}.
In contrast, the memory consumption for LSMDP is mainly $O(K \times N^2)$, which represents the number of simulated paths of the Markov chains that must be stored in the machine at any given time.
We summarize the memory consumption of the two algorithms in Table \ref{table:memory}.

\begin{table}[htb]
\begin{center}
\footnotesize{
\begin{tabular}{|c|c|c|}
\hline
Algorithm & \LPz & \LPo\\
\hline
SRMDP &$N^{ 1+d/2}$ &$ N^{ 1+d/4}\vee N^2$\\
LSMDP & $N^{ 2+d/2}$ &$N^{ 2+d/4}$\\
\hline
\end{tabular}
}
\end{center}
\caption{\label{table:memory}Comparison of memory requirement as a function of $N$.}
\end{table}
Observe that SRMDP requires $N$ times less memory than LSMDP with the \LPz \ basis.
This implicitly implies a gain of 2 on the dimension $d$ that can be handled.
On the other hand, if the \LPo \ basis is used, the SRMDP requires $O(N^{d/4})$ less memory for $d\le 4$ than LSMDP, and $N$ times less memory for $d\ge 4$.
Therefore, there is an implicit gain of $4$ in the dimension that can be handled by the algorithm.

\section{Numerical experiments}
\label{section:numerics}

\subsection{Model, stratification, and performance benchmark}
\label{num:model}
We use the Brownian motion model $X=W$ ($d=q$). Moreover, 
 the numerical experiments will consider the performance according to the dimension $d$.
 We introduce the function $\omega(t,x) = \exp(t + \sum_{ k=1}^q x_{ k})$.
We perform numerical experiments on the BSDE with data $g(x) =\omega(T,x)(1+\omega(T,x))^{-1}$ and
\[f(t,x,y,z) = \left( \sum_{k=1}^q z_k \right) \left(y- {2+q \over 2q} \right),\]
where $z= (z_1,\ldots,z_q)$.
The BSDE has explicit solutions in this framework, given by
\[
y_i(x) =\omega(t_i,x)(1+\omega(t_i,x))^{-1},\qquad z_{k,i}(x)  = \omega(t_i,x)(1+\omega(t_i,x))^{-2},
\]
where $z_{ k,i}(x)$ is the $ k$-th component of the $q$-dimensional cylindrical function $z_{i}(x) \in \R^q$.

The logistic distribution for Algorithm \ref{alg:srmdp} is parameterized by $\mu = 1$ and we consider $T=1$.
For the least-squares Monte Carlo, we stratify the domain $[-6.5,6.5]^q$ with uniform hypercubes.
To assess the performance of the algorithm, we compute the average mean squared error (MSE) over $10^3$ independent runs of the algorithm for three error  indicators:
\begin{equation}
\begin{split}
MSE_{Y,\text{max}}&:= \ln \left\{ 10^{-3} \max_{0\le i \le N-1} \sum_{m=1}^{10^3} | y_i(R_{i,m}) - \yM i{R_{i,m}}|^2 \right\} ,\\
MSE_{Y,\text{av}}&:= \ln \left\{ 10^{-3} N^{-1} \sum_{m=1}^{10^3} \sum_{i=0}^{N-1}  | y_i(R_{i,m}) - \yM i{R_{i,m}}|^2 \right\} ,\\
MSE_{Z,\text{av}}&:= \ln \left\{ 10^{-3}  N^{-1} \sum_{m=1}^{10^3} \sum_{i=0}^{N-1}  | z_i(R_{i,m}) - \zM i{R_{i,m}}|^2 \right\} ,
\end{split}
\label{mse}
\end{equation}
where the simulations $\{R_{i,m}; i = 0,\ldots,N-1, \ m=1,\ldots,10^3\}$ are independent and identically $\nu$-distributed,  and independently drawn from the simulations used for the LSMC scheme.
We parameterize the hypercubes according to the instructions given in the theoretical complexity analysis, see \S \ref{subsection:theoreticalComplexityAnalysis}.
In particular, we consider different values of $N$ and always set $K =O(N^{d/2})$ in \LPz\ (resp. $O(N^{d/4})$ in \LPo) and $M = O( N^2)$.
Note, however, that we do not specify the value of $\delta$, but rather the number of hypercubes per dimension $K^{1/q}$, which we denote \texttt{\#C} in what follows;
this being equivalent to setting $\delta$, but is more convenient to program.
As we shall illustrate, the error converges as predicted as $N$ increases, although the exact error values will depend on the constants that we choose in the parameterization of $K$ and $M$.

\subsection{CPU and GPU performance}
In this section, several experiments based on \S \ref{num:model} are presented to assess the  performance of CUDA implementation of Algorithm \ref{alg:srmdp};
the pseudo-algorithms are given in \S \ref{sec:GPU pseudo}.
We shall compare  its performance with a version of SRMDP implemented to run on multicore CPUs. For the design of this comparison we have followed some ideas in \cite{lee:kim:10}. Moreover, in order to test the theoretical results of \S \ref{subsection:theoreticalComplexityAnalysis},
we compare the performance of the two algorithms according to the choice of the basis functions, the impact of this choice on the convergence of the approximation of the BSDE, and the impact of this choice on the computational performance in terms of computational time and memory consumption.

There are two types of basis functions we investigate: \LPz \ in \S \ref{section:lp0}, and \LPo \ in \S \ref{section:num:lp1}.
As explained in \S \ref{subsection:theoreticalComplexityAnalysis}, the \LPz \ basis is highly suited to GPU implementation because it has a very low memory requirement per thread of computation.
On the other hand, it has a very high global memory requirement for storing coefficients.
This represents a problem in high dimensions because one needs many coefficients to obtain a good accuracy.
On the other hand, the \LPo \ basis involves a higher cost per thread, although requires a far lower global memory for storing coefficients; this implies that the impact of the GPU implementation is lower in moderate dimensional problems, but that one can solve problems in higher dimension.
Moreover, the full performance impact of the GPU implementation on the \LPo \ basis is in high dimension, where the number of strata is very high and therefore the GPU is better saturated with computations.
We illustrate numerically all of these effects in the following sections.

The numerical experiments have been performed with the following hardware and software configurations: a GPU GeForce GTX TITAN Black with $6$ GBytes of global memory (see \cite{CUDA:keplerArchitecture} for details in the architecture), two recent multicore Intel Xeon CPUs E5-2620 v2 clocked at $2.10$ GHz ($6$ cores per socket) with $62$ GBytes of RAM, CentOS Linux, NVIDIA CUDA SDK 7.5 and INTEL C compiler 15.0.6.
The CPU programs  were optimized and parallelized using OpenMP \cite{openmp}.
Since the CUDA code has been derived from an optimized C code, both codes perform the same algorithms,  and their performance can be fairly compared according to computational times;
the multicore CPUs time (CPU) and the GPU time (GPU) will all be measured in seconds in the forthcoming tables. CPU times correspond to executions using $24$ threads so as to take advantage of Intel Hyperthreading. The speedups of the CPU parallel version with respect to pure sequential CPU code are around $16$. The results are obtained in single precision, both in CPU and GPU.

\subsubsection{Examples with the approximation with \LPz \ local polynomials}
\label{section:lp0}

All examples  will be run using $64$ thread blocks each with $256$ threads.
In Table \ref{table:LP0d4} we show  results for $d=4$, with \texttt{\#C}=$\left\lfloor{4\sqrt{N}}\right\rfloor$ and $M = N^2$.
Except for the cases $\Dt = 0.2$ and $\Dt = 0.1$ where there are not enough strata to fully take advantage of the GPU, the GPU implementation provides a significant reduction in the computational time:
the GPU speed-up reaches the value $18.90$.
Moreover, the speed-up improves as we increase the \texttt{\#C}.

\begin{table}[h]
\begin{center}
{\footnotesize
\begin{tabular}{|r|r|r|r||r|r|r|r|r|r|r|}
\hline
$\Delta_t$ & \texttt{\#C} & $K$ & $M$ & $MSE_{Y,\text{max}}$ & $MSE_{Y,\text{av}}$ & $MSE_{Z,\text{av}}$ & CPU & GPU \\
\hline
\hline
 $0.2$ & $8$ & $4096$ & $25$ & $-3.712973$ & $-3.774071$ & $-0.964842$ & $0.23$ & $2.00$ \\ %
\hline
 $0.1$ & $12$ & $20736$ & $100$ & $-4.066741$ & $-4.303750$ & $-1.607104$ & $5.23$ & $2.20$ \\ 
\hline
 $0.05$ & $17$ & $83521$ & $400$ & $-4.337988$ & $-4.698645$ & $-2.302092$ & $171.92$ & $12.39$ \\ %
\hline
 $0.02$ & $28$ & $614656$ & $2500$ & $-4.472564$ & $-4.988069$ & $-3.225411$ & $58066.33$ & $3070.92$ \\ %
\hline
\end{tabular}
}

\end{center}
\caption{\LPz \ local polynomials, $d=4$, \texttt{\#C}=$\left\lfloor{4\sqrt{N}}\right\rfloor$, $M = N^2$.}
\label{table:LP0d4}
\end{table}

Tables \ref{table:LP0d6_0} and \ref{table:LP0d6_1} show results for $d=6$ with \texttt{\#C}=$\left\lfloor{\sqrt{N}}\right\rfloor$ and \texttt{\#C}=$\left\lfloor{2\sqrt{N}}\right\rfloor$, respectively.
Convergence is clearly improved by doubling \texttt{\#C}.
In Table \ref{table:LP0d6_1} the case of $\Delta_t=0.02$ is not shown due to insufficient GPU global memory \footnote{For $\Delta_t=0.02$, the array for storing the regression coefficients will be of size $N \times K \times (D+1)$, i.e. $50 \times 14^6 \times 7 \times 4$ bytes using single precision, which equals $9.81$ GBytes, much greater than the available $6$ GBytes of device memory.}. 
In Table \ref{table:LP0d6_0}, the GPU speed-up reaches $15.93$, whereas in Table \ref{table:LP0d6_1} it reaches $14.85$. As in Table \ref{table:LP0d4}, the increase  in the speed-up is explained due to the increased number of hypercubes, thus demonstrating how important it is to have many hypercubes in the GPU implementation.
However, the finer basis requires $2^6$ times as much memory for storing coefficients.

\begin{table}[h]
\begin{center}
{\footnotesize
\begin{tabular}{|r|r|r|r||r|r|r|r|r|r|r|}
\hline
$\Delta_t$ & \texttt{\#C} & $K$ & $M$ & $MSE_{Y,\text{max}}$ & $MSE_{Y,\text{av}}$ & $MSE_{Z,\text{av}}$ & CPU & GPU\\
\hline
\hline
 $0.2$ & $2$ & $64$ & $25$ & $-2.392320$ & $-2.451332 $ & $-0.431059$ & $0.21$ & $1.99$ \\
\hline
 $0.1$ & $3$ & $729$ & $100$ & $-2.440274$ & $-2.500775$ & $-1.096603$ & $0.47$ & $2.05$ \\
\hline
 $0.05$ & $4$ & $4096$ & $400$ & $-2.829757$ & $-2.905192$ & $-1.687142$ & $17.21$ & $3.15$ \\
\hline
 $0.02$ & $7$ & $117649$ & $2500$ & $-3.235130$ & $-3.539011$ & $-2.557686$ & $13930.70$ & $874.25$ \\
\hline
\end{tabular}
}
\end{center}
\caption{\LPz \ local polynomials, $d=6$, \texttt{\#C}=$\left\lfloor{\sqrt{N}}\right\rfloor$, $M = N^2$.}
\label{table:LP0d6_0}
\end{table}

\begin{table}[h]
\begin{center}
{\footnotesize
\begin{tabular}{|r|r|r|r||r|r|r|r|r|r|}
\hline
$\Delta_t$ & \texttt{\#C} & $K$ & $M$ & $MSE_{Y,\text{max}}$ & $MSE_{Y,\text{av}}$ & $MSE_{Z,\text{av}}$ & CPU & GPU\\
\hline
\hline
 $0.2$ & $4$ & $4096$ & $25$ & $-2.707882$ & $-2.784022$ & $-0.477751$ & $0.29$ & $1.94$ \\
\hline
 $0.1$ & $6$ & $46656$ & $100$ & $-3.195937$ & $-3.294488$ & $-1.133834$ & $13.72$ & $2.44$ \\
\hline
 $0.05$ & $8$ & $262144$ & $400$ & $-3.505867$ & $-3.664396$ & $-1.795697$ & $775.33$ & $52.20$ \\
\hline
\end{tabular}
}

\end{center}
\caption{\LPz \ local polynomials, $d=6$, \texttt{\#C}=$\left\lfloor{2\sqrt{N}}\right\rfloor$, $M = N^2$.}
\label{table:LP0d6_1}
\end{table}

Table \ref{table:LP0d11} shows that the algorithm can work for $d=11$ in several seconds with a reasonable accuracy in a GPU.
The corresponding speed-up with respect to CPU version is around $13.35$. For the execution with $\Delta_t = 0.1$ we are going to report the GFlop rate, and also the memory transfer to/from the global memory. Inside the kernel, the functions computing the regression coefficients (denoted by \texttt{compute\_psi\_Z} and \texttt{compute\_psi\_Y} in the Listing \ref{list:kernelLP0}) are memory bounded, reaching  236.795 GBytes/s when reading/writing from/to global memory. The rest of the kernel is more computationally limited. In the overall kernel, the memory transfer from/to global memory is around 160 GBytes/s and the Gflop rate is around 238 GFlop/s, although around the 30\% of the instructions executed by the kernel are integer instructions to assign the simulations to the strata in the resimulation stage during the computation of the responses.

\begin{table}[h]
\begin{center}
{\footnotesize
\begin{tabular}{|r|r|r|r||r|r|r|r|r|r|r|}
\hline
$\Delta_t$ & \texttt{\#C} & $K$ & $M$ & $MSE_{Y,\text{max}}$ & $MSE_{Y,\text{av}}$ & $MSE_{Z,\text{av}}$ & CPU & GPU \\
\hline
\hline
 $0.2$ & $2$ & $2048$ & $25$ & $-2.152253$ & $-2.202357$ & $0.211590$ & $0.27$ & $1.99$ \\
\hline
 $0.1$ & $3$ & $177147$ & $100$ & $-2.144843$ & $-2.267742$ & $-0.469759$ & $67.96$ & $6.29$ \\
\hline
 $0.05$ & $4$ & $4194304$ & $400$ & $-2.484169$ & $-2.633602$ & $-1.070096$ & $28154.07$ & $2108.64$ \\
\hline
\end{tabular}
}

\end{center}
\caption{\LPz \ local polynomials, $d=11$, \texttt{\#C}=$\left\lfloor{\sqrt{N}}\right\rfloor$, $M = N^2$.}
\label{table:LP0d11}
\end{table}

\subsubsection{Examples with the approximation with \LPo \ local polynomials}
\label{section:num:lp1}

In this section we show the results corresponding to the approximation with the \LPo \ basis.
Compared to \LPz, this basis consumes much less global memory to store  coefficients, because it requires far fewer hypercubes, see \S \ref{subsection:theoreticalComplexityAnalysis}.
On the other hand, the approximation with \LPo \ basis demands higher thread memory due to the storage of a large matrix for each hypercube, as explained in \S \ref{subsection:theoreticalComplexityAnalysis}.
This may have an impact on the computational time on the GPU:
recalling from \S \ref{sec:GPU pseudo} the GPU handles multiple hypercubes at any given time, each one requiring the storage of a matrix $A$, the global memory capacity of the GPU device restricts the number of threads we can handle at any given time.
This issue is much less significant with the \LPz \ basis.
In order to optimize the performance of the \LPo \ basis, we must minimize the thread memory storage.
 We implement the Householder reflection method for QR-factorization, \cite[Alg. 5.3.2]{golu:vanl:96}.
For this, we must store a matrix containing $M\times(d+1) = O( N^2)$ floating point values per thread on the GPU memory.
Thanks to the reduced global memory storage for coefficients, we are able to work in a rather high dimension $d=19$.
\begin{remark}
There are many methods to implement QR-factorization.
However, the choice of method has a substantial impact on the performance of the GPU implementation.
For example, the Givens rotation method \cite[Alg. 5.2.2]{golu:vanl:96} requires the storage of an $M\times M$ matrix, which corresponds to $O(N^4)$ floating points.
This is rather more than the required $O(N^2)$ for the Householder reflection method given in Section \ref{sec:explicit coefs}.
Therefore, the Givens rotation method would be far slower when implemented on a GPU than the Householder reflection method, because it may not be possible to use an optimal thread configuration.
\end{remark}

\begin{remark}
In the forthcoming examples, we use more simulations per stratum for the \LPo \ basis compared to the equivalent results for \LPz.
This is to account for the additional statistical and interdependence errors, 
as explained in \S \ref{subsection:theoreticalComplexityAnalysis}.
\end{remark}

 In Table \ref{table:LP1d4}, we present results for $d=4$. These results are to be compared with Table \ref{table:LP0d4}, where in particular the $MSE_{Z,\text{av}}$ results are closer line to line.
 The computational time is substantially improved for the CPU and GPU calculations.
 Also note that, unlike  for the $Z$ component, the accuracy for the $Y$ component is substantially better for the \LPo \ basis than for the \LPz \ one.
 The difference in the accuracy results between the $Y$ and $Z$ components is likely explained by the fact that the function $x\mapsto z_i(x)$ is rather flat, so it is much better approximated by \LPz \ basis functions than $x\mapsto y_i(x)$. The GPU speed-up reaches $8.05$.

 \begin{table}[h]
\begin{center}
{\footnotesize
\begin{tabular}{|r|r|r|r||r|r|r|r|r|r|r|r|r|}
\hline
$\Delta_t$ & \texttt{\#C} & $K$ & $M$ & $MSE_{Y,\text{max}}$ & $MSE_{Y,\text{av}}$ & $MSE_{Z,\text{av}}$ & CPU & GPU \\
\hline
\hline
 $0.2$ & $3$ & $81$ & $125$ & $-4.021483$ & $ -4.131725$ & $ -0.900286$ & $0.11$ & $0.23$ \\
\hline
 $0.1$ & $5$ & $625$ & $500$ & $-4.290881$ & $-4.695769$ & $-1.551480$ & $1.26$ & $0.79$ \\
\hline
 $0.05$ & $7$ & $2401$ & $2000$ & $-4.541253$ & $-5.022405$ & $-2.281332$ & $43.56$ & $7.83$ \\
\hline
 $0.02$ & $10$ & $10000$ & $12500$ & $-4.574551$ & $-5.143310$ & $-3.228237$ & $6827.98$ & $847.83$ \\
\hline
\end{tabular}}
\end{center}
\caption{\LPo \ local polynomials, $d=4$, \texttt{\#C}=$\left\lfloor{3\sqrt{d\sqrt{N}}}-5\right\rfloor$, $M = (d+1)N^2$.}
\label{table:LP1d4}
\end{table}
 
Next, results for $d=6$ are shown. Thus, we compare Table \ref{table:LP1d6_1} below with Table \ref{table:LP0d6_1}.
For a given precision on the $Z$ component of the solution, we observe substantial improvements in the CPU codes, but no such gains on the GPU version.
In contrast, the accuracy of the $Y$ approximation is, as in the $d=4$ case, substantially better.
Moreover, whereas we were not able to do computations for $\Dt = 0.02$ with the \LPz \ basis due to insufficient GPU memory, we are now able to make these calculations with the  \LPo \ basis.
The GPU speed-up reaches $6.13$, which is lower than the \LPz \ basis speed-up factor, as expected.

 \begin{table}[h]
\begin{center}
{\footnotesize
\begin{tabular}{|r|r|r|r||r|r|r|r|r|r|r|r|r|}
\hline
$\Delta_t$ & \texttt{\#C} & $K$ & $M$ & $MSE_{Y,\text{max}}$ & $MSE_{Y,\text{av}}$ & $MSE_{Z,\text{av}}$ & CPU & GPU \\
\hline
\hline
 $0.2$ & $2$ & $64$ & $175$ & $-3.504153$ & $-3.668801$ & $-0.461077$ & $0.20$ & $0.32$ \\
\hline
 $0.1$ & $3$ & $729$ & $700$ & $-3.804091$ & $-3.911488 $ & $-1.133263$ & $1.84$ & $1.66$ \\
\hline
 $0.05$ & $4$ & $4096$ & $2800$ & $-4.075928$ & $-4.231639$ & $-1.791519$ & $125.81$ & $20.50$ \\
\hline
 $0.02$ & $6$ & $46656$ & $17500$ & $-3.809734$ & $-4.529827$ & $-2.689432$ & $82529.21$ & $15283.18$ \\
\hline
\end{tabular}}
\end{center}
\caption{\LPo \ local polynomials, $d=6$, \texttt{\#C}=$\left\lfloor{1.5\sqrt{d\sqrt{N}}}-3\right\rfloor$, $M = (d+1)N^2$.}
\label{table:LP1d6_1}
\end{table}

In the high dimensional $d=11$ setting shown in Table \ref{table:LP1d11}, we compare with Table \ref{table:LP0d11}. We observe a speed-up of order $5.63$ compared to the CPU implementation.

 \begin{table}[h]
\begin{center}
{\footnotesize
\begin{tabular}{|r|r|r|r||r|r|r|r|r|r|r|r|r|}
\hline
$\Delta_t$ & \texttt{\#C} & $K$& $M$ & $MSE_{Y,\text{max}}$ & $MSE_{Y,\text{av}}$ & $MSE_{Z,\text{av}}$ & CPU & GPU \\
\hline
\hline
 $0.2$ & $2$ & $2048$ &  $2000$ & $-3.271648$ & $-3.368051 $ & $-1.455388$ & $10.33$ & $3.41$ \\
\hline
 $0.2$ & $3$ & $177147$ & $4000$ & $-3.269004$ & $-3.403994 $ & $-1.975300$ & $1635.95$ & $290.56$ \\
\hline
\end{tabular}}
\end{center}
\caption{\LPo \ local polynomials, $d=11$.}
\label{table:LP1d11}
\end{table}

In the remainder of this section, we present results in  dimension $d=12 $ to $d = 19$ (in Tables \ref{table:LP1d12}, \ref{table:LP1d13}, \ref{table:LP1d14} and \ref{table:LP1d15_19}, respectively) for which the capacity of the GPU is maximally used to provide the highest possible accuracy. The GPU speed-up reaches up to $5.67$ compared to the CPU implementation. Note that for the example with $d=19$ in Table \ref{table:LP1d15_19} the LSMDP algorithm would require $118$ GBytes of memory to store all the simulations at a given time, whereas the here proposed SRMDP algorithm can be executed with less than $6$ GBytes and with much less computational time owing to it does not need to associate the simulations to hypercubes. Finally, for the example with $\Delta_t = 0.2$, \texttt{\#C}$=2$ and $M=4000$ of Table \ref{table:LP1d13} we next report the GFlop rate and the memory transfer from/to global memory. In the overall kernel, the memory transfer from/to global memory is around 132 GBytes/s and the GFlop rate is around 136 GFlop/s. In order to understand why the device memory bandwidth used by LP1 kernel is lower than the one used by LP0 kernel, observe that at any given time, for each strata we are accessing $(d+1)$ times more elements in the LP1 framework in the re-simulation stage of the responses computation. 
Moreover, these accesses  are potentially non-coalesced, because the forward process is randomly re-simulated and we do not know a priori in which strata is going to fall.

\begin{table}[h]
\begin{center}
{\footnotesize
\begin{tabular}{|r|r|r|r||r|r|r|r|r|r|r|r|r|}
\hline
$\Delta_t$ & \texttt{\#C} & $K$ & $M$ & $MSE_{Y,\text{max}}$ & $MSE_{Y,\text{av}}$ & $MSE_{Z,\text{av}}$ & CPU & GPU \\
\hline
\hline
 $0.2$ & $2$ & $4096$ & $2000$ & $-3.111153$ & $-3.232051$ & $-1.297737$ & $22.29$ & $4.95$ \\
\hline
 $0.2$ & $3$ & $531441$ & $4000$ & $-3.214096$ & $-3.272644$ & $-1.821935$ & $5554.49$ & $1196.28$ \\
\hline
\end{tabular}}
\end{center}
\caption{\LPo \ local polynomials, $d=12$.}
\label{table:LP1d12}
\end{table}

\begin{table}[h]
\begin{center}
{\footnotesize
\begin{tabular}{|r|r|r|r||r|r|r|r|r|r|r|r|r|}
\hline
$\Delta_t$ & \texttt{\#C} & $K$ & $M$ & $MSE_{Y,\text{max}}$ & $MSE_{Y,\text{av}}$ & $MSE_{Z,\text{av}}$ & CPU & GPU \\
\hline
\hline
 $0.2$ & $2$ & $8192$ & $3000$ & $-2.995413$ & $-3.153302 $ & $-1.460911$ & $69.45$ & $12.46$ \\
\hline
 $0.2$ & $2$ & $8192$ & $4000$ & $-3.022855$ & $-3.158471$ & $-1.649632$ & $94.07$ & $16.58$ \\
\hline
\end{tabular}}
\end{center}
\caption{\LPo \ local polynomials, $d=13$.}
\label{table:LP1d13}
\end{table}

\begin{table}[h]
\begin{center}
{\footnotesize
\begin{tabular}{|r|r|r|r||r|r|r|r|r|r|r|r|r|}
\hline
$\Delta_t$ & \texttt{\#C} & $K$ & $M$ & $MSE_{Y,\text{max}}$ & $MSE_{Y,\text{av}}$ & $MSE_{Z,\text{av}}$ & CPU & GPU \\
\hline
\hline
 $0.2$ & $2$ & $16384$ & $2000$ & $-3.011673$ & $-3.092870$ & $-1.026128$ & $102.11$ & $19.55$ \\
\hline
 $0.2$ & $2$ & $16384$ & $4000$ & $-3.029663$ & $-3.105833$ & $-1.558935$ & $205.82$ & $50.62$ \\
\hline
\end{tabular}}
\end{center}
\caption{\LPo \ local polynomials, $d=14$.}
\label{table:LP1d14}
\end{table}

\begin{table}[h]
\begin{center}
{\footnotesize
\begin{tabular}{|r|r|r||r|r|r|r|r|r|r|r|}
\hline
$d$ & $K$ & $M$ & $MSE_{Y,\text{max}}$ & $MSE_{Y,\text{av}}$ & $MSE_{Z,\text{av}}$ & CPU & GPU \\
\hline
\hline
$15$ & $32768$ & $5000$ & $-2.981181$ & $-3.106590$ & $-1.574532$ & $578.88$ & $139.60$ \\
\hline
$16$ & $65536$ & $6000$ & $-2.795353$ & $-2.959375$ & $-1.588716$ & $1411.75$ & $429.53$ \\
\hline
$17$ & $131072$ & $5000$ & $-2.772595$ & $-2.936549$ & $-1.371146$ & $2580.06$ & $793.61$ \\
\hline
$18$ & $262144$ & $4000$ & $-2.845755$ & $-2.918057$ & $-1.114600$ & $4275.13$ & $1589.30$ \\
\hline
$19$ & $524288$ & $3200$ & $-2.726427$ & $-2.851617$ & $-0.839849$ & $7245.91$ & $4370.31$ \\
\hline
\end{tabular}}
\end{center}
\caption{\LPo \ local polynomials, $d=15, \ldots, 19$, $\Delta_t = 0.2$, \texttt{\#C} $=2$.}
\label{table:LP1d15_19}
\end{table}

\appendix
\section*{Appendix}
\subsection{Proof of Proposition \ref{prop:hnu}}
\label{proof:proposition:prop:hnu}

It is known from \cite[Proposition 3.1]{gobe:turk:15} that it is sufficient to show that there is a  continuous $C_\rho : \R \to [1,\infty)$ such that,
for all $\Lambda \ge 0$, $\lambda \in [0,\Lambda]$, and $y\in \R^d$,
\begin{equation}
\label{eq:USES}
{\plog(y)  \over C_\rho(\Lambda) } \le \int_{\R^d} \plog (y + z \sqrt \lambda) \exp(-\frac {|z|^2}2)  \dd z \le C_\rho(\Lambda) \plog (y).
\end{equation}

The proof is given for $d=1$; generalization to higher dimensions is obvious because the multidimensional density is  just the product of the one-dimensional densities over the components. Moreover, for simplicity the proof is given for $\mu =1$, as generality in this parameter does not change the proof. For simplicity, we will write $\plog(x) = p(x)$ in what follows.

In terms of the hyperbolic cosine function, the density can be expressed as
\[
p(x) = {\exp(-x) \over \big(1 + \exp(-x)\big)^2} = \left( \exp(\frac x2) + \exp(-\frac x2)  \right)^{-1} = \left(2 \cosh(\frac x2) \right)^{-1}.
\]
We define $I(y,\lambda) := 2 \int_\R p(y+z\sqrt \lambda) \exp(-\frac{z^2}2) \dd z $, so that from the relation $\cosh(x+y) = \cosh(x)\cosh(y) + \sinh(x)\sinh(y)$, we have that
\begin{align*}
I(y,\lambda) = \int_{\R} { \exp(-\frac{z^2}2) \over  \cosh(\frac y2)\cosh(\frac{z\sqrt\lambda}2) + \sinh(\frac y2)\sinh(\frac{z\sqrt\lambda}2)} \dd z:=I_{+}(y,\lambda) +I_{-}(y,\lambda)
\end{align*}
where $I_{+,-}$ denotes respectively the integral on $\R^{+}$ and $\R^{-}$.
\paragraph{Upper bound}
Suppose that $y\ge0$. Then, if $z\ge0$, it follows that $\sinh(y/2)\sinh(z\sqrt\lambda/2) \geq 0$, whence
\begin{align*}
I_{+}(y,\lambda) &\le \int_{\R_+}{ \exp(-\frac{z^2}2) \over  \cosh(\frac y2) \cosh(\frac{z\sqrt\lambda}2)} \dd z  = 2  \int_{\R_+} e^{-\frac{z^2}2} \dd z \times p(y).
\end{align*}
On the other hand, if $z\le 0$, then $\sinh(\frac y2)\sinh(\frac{z\sqrt\lambda}2)\geq \cosh(\frac y2)\sinh(\frac{z\sqrt\lambda}2)$, therefore
\begin{align*}
I_{-}(y,\lambda)  &\le \int_{\R_-}{ \exp(-\frac{z^2}2) \over  \cosh(\frac y2) \{ \cosh(\frac{z\sqrt\lambda}2) + \sinh(\frac{z\sqrt\lambda}2)\}} \dd z
 = 2  \int_{\R_-} \exp\left(-\frac{z^2}2 - \frac{z\sqrt\lambda}2 \right) \dd z \times p(y).
\end{align*}
Therefore, if $y\geq 0$ then $I(y,\lambda) \le 2 \int_\R \exp ( \frac{-z^2 + (z)_{-} \sqrt\Lambda }2) \dd z \times p(y)$.
Observing that $I(y,\lambda)$ is symmetric in $y$, thus the upper bound \eqref{eq:USES} is proved.

\paragraph{Lower bound}
Suppose that $y\ge0$.
For $z\leq 0$, observe that $\sinh(\frac y2)\sinh(\frac{z\sqrt\lambda}2) \leq 0$, whence
\begin{align*}
I_{-}(y,\lambda)  &\ge \int_{\R_-}{ \exp(-\frac{z^2}2) \over  \cosh(\frac y2) \cosh(\frac{z\sqrt\lambda}2)} \dd z
 \geq  2  \int_{\R_-} \frac{\exp(-\frac{z^2}2)  }{ \cosh(\frac{z\sqrt\Lambda}2)} \dd z \times p(y).
\end{align*}
For $z\ge0$, we use that $\sinh(\frac y2)\sinh(\frac{z\sqrt\lambda}2) \le
\cosh(\frac y2)\sinh(\frac{z\sqrt\lambda}2)$ to obtain
\begin{align*}
I_{+}(y,\lambda) &\ge  \int_{\R_+}{ \exp(-\frac{z^2}2) \over  \cosh(\frac y2) \{ \cosh(\frac{z\sqrt\lambda}2) + \sinh(\frac{z\sqrt\lambda}2)\}} \dd z
  \\ 
&\ge 2  \int_{\R_+} \exp\left(-\frac{z^2}2 - \frac{z\sqrt\Lambda}2 \right) \dd z \times p(y).
\end{align*}
The result on $y<0$ follows again from the symmetry of $I(y,\lambda)$.
 \qed
 
\subsection{Stability results for discrete BSDE}
\label{section:apriori}
We recall standard results borrowed to \cite{gobe:turk:14} and adapted to our setting, they  are aimed at comparing two solutions of discrete BSDEs of the form \eqref{eq:MDP:intro} with different data. Namely, consider two discrete BSDEs,
$(Y_{1,i},Z_{1,i})_{0 \le i < N}$
and $(Y_{2,i}, Z_{2,i})_{0\le i< N}$, given by
\begin{align*}
 Y_{l,i} &= \Espi{ g(X_N) +\sum_{j=i}^{N-1}f_{l,j}(X_j, Y_{l,j+1}, Z_{l,j}) \Dt},\\
\Dt  Z_{l,i} &= \Espi{ ( g(X_N) +\sum_{j=i+1}^{N-1} f_{l,j}(X_j, Y_{l,j+1},Z_{l,j})\Dt ) \Delta W_i},
\end{align*}
for $i \in \{0, \dots ,  N-1\}$, $l\in\{1,2\}.$

To allow the driver $f_{1,i}$ to depend on the clouds of simulations (necessary in the analysis), we require that it is measurable w.r.t. $\cF_T$ instead of $\cF_{t_i}$ as usually.
\begin{proposition}\label{prop: apriori}
Assume that \HG\ and \HX hold.
Moreover, for each $i \in \{0, \ldots, N-1\}$, assume that $f_{1,i}(X_i,Y_{1,i+1},Z_{1,i}) \in \L_2(\cF_T)$ and
$ f_{2}$ satisfies \HF\ with constants $L_{f_2}$ and $C_{f_2}$.
Then, for any  $\gamma\in (0,+\infty)$
satisfying $6q(\Dt+\frac{ 1 }{\gamma})L^2_{  f_{2}} \leq 1$, we have for $0\leq i < N$
\begin{align}
&|Y_{1,i}- Y_{2,i}|^2 \Gamma_i+ \sum_{j=i}^{N-1} \Dt \Espi{|Z_{1,j}- Z_{2,j}|^2} \Gamma_j \nonumber\\
&
\leq 3\cprop \left(\frac{ 1 }{\gamma }+\Dt\right) \sum_{j=i}^{N-1}
\Dt \Espi{|f_{1,j}(X_j,Y_{1,j+1},Z_{1,j})- f_{2,j}(X_j,Y_{1,j+1},Z_{1,j})|^2} \Gamma_j
,
\label{eq:prop21}
\end{align}
where
$\Gamma_i :=  (1+\gamma \Dt)^i$ and $\cprop:={2} q+(1+T)e^{T/2}$.
\end{proposition}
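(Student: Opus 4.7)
The plan is to mimic the classical energy argument for BSDE stability in the multistep discrete setting. Write $\delta Y_i := Y_{1,i} - Y_{2,i}$ and $\delta Z_i := Z_{1,i} - Z_{2,i}$; note the matched terminal data gives $\delta Y_N = 0$. I derive one-step identities for $\delta Y, \delta Z$, expand $|\delta Y_i|^2$ using Young's inequality with the free parameter $\gamma>0$, extract a $\Dt|\delta Z_i|^2$ contribution via the $\L_2$-orthogonal projection onto the Brownian increments (Pythagorean identity), and conclude with a discrete Gronwall iteration weighted by $\Gamma_i = (1+\gamma\Dt)^i$.

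\textbf{Key steps.} Peeling the $j=i$ term from the multistep MDP \eqref{eq:MDP:intro} and applying the tower property produces the one-step recursions
\begin{align*}
\delta Y_i &= \Espi{\delta Y_{i+1}} + \Dt\,\Espi{\Delta f_i}, \\
\Dt\,\delta Z_i &= \Espi{\delta Y_{i+1}\,\Delta W_i},
\end{align*}
where $\Delta f_i := f_{1,i}(X_i,Y_{1,i+1},Z_{1,i}) - f_{2,i}(X_i,Y_{2,i+1},Z_{2,i})$. Applying $(a+b)^2 \le (1+\gamma\Dt)a^2 + (1+\tfrac{1}{\gamma\Dt})b^2$ and Jensen gives
\[
|\delta Y_i|^2 \le (1+\gamma\Dt)\,\bigl|\Espi{\delta Y_{i+1}}\bigr|^2 + \bigl(\Dt+\tfrac{1}{\gamma}\bigr)\Dt\,\Espi{|\Delta f_i|^2}.
\]
Decomposing $\delta Y_{i+1}$ in $\L_2(\cF_{t_{i+1}})$ onto $\{1,\Delta W_{1,i},\ldots,\Delta W_{q,i}\}$ and identifying the Brownian projection with $\Dt\,\delta Z_i$ yields the Pythagorean bound $\bigl|\Espi{\delta Y_{i+1}}\bigr|^2 + \Dt|\delta Z_i|^2 \le \Espi{|\delta Y_{i+1}|^2}$. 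Splitting $\Delta f_i = \bar f_i + [f_{2,i}(X_i,Y_{1,i+1},Z_{1,i}) - f_{2,i}(X_i,Y_{2,i+1},Z_{2,i})]$ with $\bar f_i := f_{1,i}(X_i,Y_{1,i+1},Z_{1,i}) - f_{2,i}(X_i,Y_{1,i+1},Z_{1,i})$, the Lipschitz property of $f_{2}$ and $(a+b+c)^2\le 3(a^2+b^2+c^2)$ give $|\Delta f_i|^2 \le 3|\bar f_i|^2 + 3L_{f_2}^2(|\delta Y_{i+1}|^2 + |\delta Z_i|^2)$. Substituting back, the smallness assumption $6q(\Dt+1/\gamma)L_{f_2}^2 \le 1$ lets me absorb the $|\delta Z_i|^2$ contribution on the left, producing a recursion of the form
\[
|\delta Y_i|^2 + c\,\Dt|\delta Z_i|^2 \le (1+\gamma\Dt)(1 + C L_{f_2}^2\Dt)\,\Espi{|\delta Y_{i+1}|^2} + 3\bigl(\Dt+\tfrac{1}{\gamma}\bigr)\Dt\,\Espi{|\bar f_i|^2}.
\]
Multiplying by $\Gamma_i$, taking $\Espi{\cdot}$, and summing from $j=i$ to $N-1$ telescopes the $Y$-terms via $\Gamma_{i+1} = (1+\gamma\Dt)\Gamma_i$; using $\delta Y_N=0$ delivers the two-sided estimate \eqref{eq:prop21}.

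\textbf{Main obstacle.} The delicate point is tracking the constants precisely to recover the explicit form $\cprop = 2q + (1+T)e^{T/2}$ rather than a mere order-of-magnitude bound: the $2q$ encodes the $q$-dimensional Brownian structure entering when bounding $|\delta Z_i|^2$ against $\Espi{|\delta Y_{i+1}|^2}$ (each direction $\Delta W_{k,i}$ contributes one $\Dt$ in the Pythagorean decomposition and one in the complementary Cauchy--Schwarz bound $|\delta Z_i|^2 \le (q/\Dt)\Espi{|\delta Y_{i+1}|^2}$), while $(1+T)e^{T/2}$ arises from the worst-case growth $\Gamma_N \le e^{\gamma T}$ at the admissible choice $\gamma\sim 1/(1+T)$, combined with the prefactor $(1/\gamma+\Dt)$ appearing in front of $\Espi{|\bar f_j|^2}$. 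Once these constants are bookkept, the recursion iterates cleanly.
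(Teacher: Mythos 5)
The paper itself does not prove this proposition: it is stated as a result ``borrowed'' from \cite{gobe:turk:14}, so there is no in-paper proof to compare against. Your argument is the standard discrete energy/Gronwall stability proof that underlies that reference, and it is sound: the one-step recursions, the Young inequality with parameter $\gamma\Dt$, the conditional Pythagoras identity $|\Espi{\delta Y_{i+1}}|^2+\Dt|\delta Z_i|^2\le\Espi{|\delta Y_{i+1}|^2}$, the three-term splitting of the driver, and the absorption of $\Dt|\delta Z_i|^2$ under $6q(\Dt+1/\gamma)L_{f_2}^2\le 1$ all go through (note the hypothesis gives $3L_{f_2}^2(\Dt+1/\gamma)\le 1/(2q)\le 1/2$, so the retained coefficient of $\Dt|\delta Z_i|^2$ is at least $1/2$ and the parasitic factor on $\Espi{|\delta Y_{i+1}|^2}$ is at most $1+\Dt/2$, whose product over the time steps gives $e^{T/2}$). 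Carried to the end, your recursion yields the bound with constant $3\cdot 2e^{T/2}(1/\gamma+\Dt)$, and since $2e^{T/2}\le 2q+(1+T)e^{T/2}$ for all $q\ge 1$, $T\ge 0$, the stated inequality follows a fortiori; your constant bookkeeping is therefore not an obstacle. The only inaccuracy is in your heuristic for the provenance of $\cprop$: the factor $(1+T)e^{T/2}$ cannot come from bounding $\Gamma_N\le e^{\gamma T}$ at a particular $\gamma$, because the weights $\Gamma_j$ are retained on both sides of \eqref{eq:prop21} and the estimate must hold for every admissible $\gamma$; likewise the $2q$ is an artifact of the particular (cruder) treatment of the $Z$-term in the cited reference rather than something your Pythagorean route needs.
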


\vspace{1cm}
{\bf{Acknowledgments}}
\vspace{0.3cm}

The authors are gratefull to the three anonymous referees whose remarks have helped us to improve the final version of the paper.


\begin{thebibliography}{10}

\bibitem{abba:lape:09}
{\sc L.~Abbas-Turki and B.~Lapeyre}, {\em American options pricing on
  multi-core graphic cards}, in Business Intelligence and Financial
  Engineering, 2009. BIFE'09. International Conference on, IEEE, 2009,
  pp.~307--311.

\bibitem{abba:vial:lape:merc:09}
{\sc L.~Abbas-Turki, S.~Vialle, B.~Lapeyre, and P.~Mercier}, {\em High
  dimensional pricing of exotic {E}uropean contracts on a {GPU} cluster, and
  comparison to a {CPU} cluster}, in Parallel \& Distributed Processing, 2009.
  IPDPS 2009. IEEE International Symposium on, IEEE, 2009, pp.~1--8.

\bibitem{bend:stei:12}
{\sc C.~Bender and J.~Steiner}, {\em Least-squares {M}onte {C}arlo for
  {BSDE}s}, in Numerical Methods in Finance, R.~Carmona, P.~{Del Moral}, P.~Hu,
  and N.~Oudjane, eds., Series: Springer Proceedings in Mathematics, Vol. 12,
  2012, pp.~257--289.

\bibitem{bouc:wari:12}
{\sc B.~Bouchard and X.~Warin}, {\em {Monte-Carlo} valuation of {A}merican
  options: facts and new algorithms to improve existing methods}, in Numerical
  Methods in Finance, R.~Carmona, P.~{Del Moral}, P.~Hu, and N.~Oudjane, eds.,
  Series: Springer Proceedings in Mathematics, Vol. 12, 2012, pp.~215--255.

\bibitem{bria:laba:14}
{\sc P.~Briand and C.~Labart}, {\em Simulation of {BSDEs} by {W}iener {C}haos
  {E}xpansion}, Annals of Applied Probability, 24 (2014), pp.~1129--1171.

\bibitem{fati:phil:13}  
{\sc M.~Fatica and E.~Phillips}, {\em Pricing American options with least squares Monte Carlo on GPUs}, Proceeding of the 6th Workshop on High Performance Computational Finance, ACM (2013). doi$>$10.1145/2535557.2535564.

\bibitem{funa:92}
{\sc D.~Funaro}, {\em Polynomial approximation of differential equations}, Lecture Notes in Physics. New Series m: Monographs, Volume 8, Springer-Verlag, Berlin, 1992.

\bibitem{gobe:laba:10}
{\sc E.~Gobet and C.~Labart}, {\em Solving {BSDE} with adaptive control
  variate}, SIAM Numerical Analysis, 48 (2010), pp.~257--277.

\bibitem{gobe:lemo:wari:05}
{\sc E.~Gobet, J-P. Lemor, and X.~Warin}, {\em A regression-based {M}onte
  {C}arlo method to solve backward stochastic differential equations}, Annals
  of Applied Probability, 15 (2005), pp.~2172--2202.

\bibitem{gobe:makh:10}
{\sc E.~Gobet and A.~Makhlouf}, {\em {$L_2$}-time regularity of {BSDE}s with
  irregular terminal functions}, Stochastic Processes and their Applications,
  120 (2010), pp.~1105--1132.

\bibitem{gobe:turk:15}
{\sc E.~Gobet and P.~Turkedjiev}, {\em Adaptive importance sampling in
  least-squares {M}onte {C}arlo algorithms for backward stochastic differential
  equations}, Preprint, hal-01169119, accepted for publication in Stochastic Processes and Applications, (2015).

\bibitem{gobe:turk:13b}
\leavevmode\vrule height 2pt depth -1.6pt width 23pt, {\em Approximation of
  {BSDEs} using {M}alliavin weights and least-squares regression}, Bernoulli, 22(1) (2016), pp.~530--562.

\bibitem{gobe:turk:14}
\leavevmode\vrule height 2pt depth -1.6pt width 23pt, {\em Linear regression
  {MDP} scheme for discrete backward stochastic differential equations under
  general conditions}, Mathematics of Computation, 85(299) (2016), pp.~1359--1391.


\bibitem{golu:vanl:96}
{\sc G.~H. Golub and C.~F. Van~Loan}, {\em Matrix computations}, Johns Hopkins
  Studies in the Mathematical Sciences, Johns Hopkins University Press,
  Baltimore, MD, third~ed., 1996.

\bibitem{gyor:kohl:krzy:walk:02}
{\sc L.~Gy{\"o}rfi, M.~Kohler, A.~Krzy{\.z}ak, and H.~Walk}, {\em A
  distribution-free theory of nonparametric regression}, Springer Series in
  Statistics, Springer-Verlag, New York, 2002.

\bibitem{laba:lelo:13}
{\sc C.~Labart and J.~Lelong}, {\em A parallel algorithm for solving {BSDE}s},
  Monte Carlo Methods Appl., 19 (2013), pp.~11--39.
  
  
 \bibitem{lee:kim:10}
{\sc V-W. Lee, C.~Kim, J.~Chhugani, M.~Deisher, D.~Kim, A-D. Nguyen, N.~Satish, M.~Smelyanskiy, S.~Chennupaty, P.~Hammarlund, R.~Singhal, P.~Dubey}, {\em Debunking the 100X GPU vs. CPU myth: an evaluation of throughput computing on CPU and GPU}, Newsletter ACM SIGARCH Computer Architecture News - ISCA '10, 38 (3) (2010), pp.~451--460.  

\bibitem{lemo:gobe:wari:06}
{\sc J-P. Lemor, E.~Gobet, and X.~Warin}, {\em Rate of convergence of an
  empirical regression method for solving generalized backward stochastic
  differential equations}, Bernoulli, 12 (2006), pp.~889--916.

\bibitem{CUDA:keplerArchitecture}
{\sc NVIDIA}, {\em NVIDIA's Next Generation CUDA Compute Architecture: Kepler
  GK110/210}.

\bibitem{CUDA:programmingGuide}
\leavevmode\vrule height 2pt depth -1.6pt width 23pt, {\em CUDA C Programming
  Guide}, March 2015.

\bibitem{CUDA:curandLibrary}
\leavevmode\vrule height 2pt depth -1.6pt width 23pt, {\em CURAND Library},
  March 2015.

\bibitem{turk:13b}
{\sc P.~Turkedjiev}, {\em Two algorithms for the discrete time approximation of
  {M}arkovian backward stochastic differential equations under local
  conditions}, Electronic Journal of Probability, 20 (2015).

\bibitem{zhan:04}
{\sc J.~Zhang}, {\em A numerical scheme for {BSDE}s}, The Annals of Applied
  Probability, 14 (2004), pp.~459--488.
  
\bibitem{openmp}
{\sc OpenMP}, \url{www.openmp.org}

\end{thebibliography}

\end{document}